\documentclass[english]{ourlematema}

\title{A tropical count of binodal cubic surfaces}
\titlemark{A tropical count of binodal cubic surfaces}




\author{Madeline Brandt}
\address{Department of Mathematics, University of California, Berkeley, USA 
\\ \email{brandtm@berkeley.edu}}
\author{Alheydis Geiger}
\address{Department of Mathematics, University of T\"{u}bingen, 
Germany,\\
\email{alheydis.geiger@math.uni-tuebingen.de}}

\usepackage{graphicx}
\usepackage{makecell}
\usepackage{wrapfig}
\usepackage{amsmath}								
\usepackage{amssymb}
\usepackage{amsthm}
\usepackage{amscd}
\usepackage[square,numbers,sort&compress,sectionbib]{natbib}
\usepackage{amsfonts}
\usepackage{stmaryrd}
\usepackage[all]{xy}
\usepackage{caption}
\usepackage{subcaption}
\usepackage{euler}

\usepackage{extarrows}
\usepackage{tabularx}
\usepackage{ltxtable}
\usepackage{multirow}
\usepackage[colorlinks, linktocpage, citecolor = purple, linkcolor = blue]{hyperref}
\usepackage{color}
\usepackage{tikz}									
\usetikzlibrary{matrix}
\usetikzlibrary{patterns}
\usetikzlibrary{matrix}
\usetikzlibrary{positioning}
\usetikzlibrary{decorations.pathmorphing}
\usetikzlibrary{cd}
\usepackage[shortlabels]{enumitem}
\usepackage{color}
\usepackage{subcaption}
\usepackage{caption}
\usepackage{natbib}

\begin{document}
\bibliographystyle{lematema}
\maketitle

\begin{abstract}
There are 280 binodal cubic surfaces passing through 17 general points. For the typically used tropical point conditions, we show that 214 of these give tropicalizations such that the nodes are separated on the tropical cubic surface.
\end{abstract}

\section{Introduction}

Given some points in general position, one can ask for the number of varieties of a fixed dimension and fixed number of nodes passing through the points.
We study tropical counts of binodal cubic surfaces over $\mathbb{C}$ and $\mathbb{R}$. 
The space $\mathbb{P}^{19}$ parameterizes cubic surfaces by the coefficients of their defining polynomial. The singular cubic surfaces form a variety of degree 32 called the \emph{discriminant} in $\mathbb{P}^{19}$. The surfaces passing through a particular point in $\mathbb{P}^3$ form a hyperplane in $\mathbb{P}^{19}$. 
Thus, through 18 generic points there are 32 nodal surfaces.
The reducible singular locus of the discriminant is the union of the cuspidal cubic surfaces and the binodal cubic surfaces. Each is a codimension 2 variety in $\mathbb{P}^{19}$.

In \cite[Section 7.1]{Va03}, Vainsencher gives formulas for the number of $k$-nodal degree $m$ surfaces in a $k$ dimensional family in $\mathbb{P}^3$. That is, for k=2
and m=3, he determines the degree of the variety
parameterizing 2-nodal cubics. For $k = 2$ nodes, there are $2 (m - 2) (4 m^3 - 8 m^2 + 8 m - 25) (m - 1)^2$ such surfaces. Setting $m = 3$, we have the following count.

\begin{thm}{\cite{Va03}}
There are 280 binodal complex cubic surfaces passing through 17 general points.
\end{thm}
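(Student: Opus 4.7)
The plan is to apply Vainsencher's formula stated just above the theorem, with $m=3$ and $k=2$. I would compute the individual factors: $2(m-2) = 2$, $(m-1)^2 = 4$, and $4m^3 - 8m^2 + 8m - 25 = 108 - 72 + 24 - 25 = 35$. Multiplying these gives $2 \cdot 4 \cdot 35 = 280$, which is the claimed count. Since the theorem is a numerical specialization of a formula proved in \cite{Va03}, the verification reduces to this arithmetic check once the formula is granted.

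To see that the formula produces an enumerative count in the sense of the theorem, I would combine it with the incidence setup described in the excerpt. Cubic surfaces are parameterized by $\mathbb{P}^{19}$, and passing through a fixed point of $\mathbb{P}^3$ cuts out a hyperplane. Imposing $17$ general point conditions therefore intersects the parameter space with a generic $2$-plane. As the binodal locus is a codimension $2$ subvariety of $\mathbb{P}^{19}$, the count of binodal cubics through $17$ general points equals its degree, namely the number Vainsencher computes.

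If one wanted to reprove Vainsencher's formula from scratch in this case, the natural strategy is intersection theory on a resolution of the discriminant $\Delta \subset \mathbb{P}^{19}$. The naive idea of taking $\Delta \cdot \Delta$ overcounts, because the self-intersection includes excess contributions from cuspidal cubics and from surfaces with higher singularities, and it also records each binodal surface with multiplicity. The main obstacle is therefore to construct a suitable compactification of the binodal locus on which node conditions can be expressed as Chern classes of tautological bundles pulled back from the universal family, and to compute the resulting class after subtracting off the excess contributions supported on the cuspidal and higher-singularity strata. For the present paper this is not needed: the formula is invoked as a known input, and the theorem follows by substitution.
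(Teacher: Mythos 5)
Your verification matches the paper's own treatment: the theorem is cited from \cite{Va03} and follows by substituting $m=3$, $k=2$ into the stated formula, and your arithmetic $2\cdot(3-2)\cdot(4\cdot 27-8\cdot 9+8\cdot 3-25)\cdot(3-1)^2=2\cdot 35\cdot 4=280$ is correct. The additional remarks on the incidence setup and on how one might reprove Vainsencher's formula are accurate context but not required, since the paper likewise invokes the formula as a known input.
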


Mikhalkin pioneered the use of tropical geometry to answer questions in enumerative geometry \cite{Mik}. 
Tropical methods have successfully counted nodal plane curves over 
$\mathbb{C}$ and $\mathbb{R}$ \cite{Mik,BM09}.
In \cite{BrMi2007,BM09} this technique is enriched by the concept of floor diagrams.
In our setting, we ask:

\begin{que}[Question 10\footnote{\textit{27 Questions on the Cubic Surface}, \url{http://cubics.wikidot.com/question:all}}]
Can the number 280 of binodal cubic surfaces through 17 general points be derived tropically?
\end{que}

For a specific choice of points, given in Section \ref{sec:background}, tropical methods are useful because the dual subdivisions of the Newton polytope are very structured. This allows us to study only 
39 subdivisions of the Newton polytope of a cubic surface. This is minuscule compared to the 344,843,867 unimodular triangulations of this polytope \cite{JJK18,JPS19}.

Singular tropical surfaces and hypersurfaces are studied in \cite{MaMaSh12,DT12}.
A surface with $\delta$ nodes as its only singularities is called \textit{$\delta$-nodal}.
The tropicalization of a $\delta$-nodal surface is called a \textit{$\delta$-nodal} tropical surface. We say a $\delta$-nodal surface is \textit{real} if the polynomial defining the surface is real and the surface has real singularities. 
	
If we count all tropical binodal cubic surfaces through our points with multiplicities, we will recover the true count. 
We study tropical surfaces with \emph{separated} nodes in the tropical surface, in the sense that the topological closures of the cells in the surface containing the nodes have empty intersection. 
To count them, we list the dual subdivisions of candidate binodal tropical cubic surfaces and count their multiplicities.

\begin{thm}
\label{thm:212}
There are $39$ tropical binodal cubic surfaces through $17$ points in Mikhalkin position (described in Section \ref{sec:background}) containing separated singularities. They give rise to $214$ complex binodal cubic surfaces through $17$ points. 
\end{thm}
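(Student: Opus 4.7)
The strategy is to enumerate the tropical binodal cubic surfaces through the fixed Mikhalkin configuration and weight them by their classical multiplicities. I would begin by recording the constraint imposed by the 17 Mikhalkin points on the dual subdivision of the Newton polytope $3\Delta_3$: these points pin down a floor-type pattern in the spirit of the floor diagrams of \cite{BrMi2007, BM09}, leaving only a small, bounded combinatorial freedom in where the two nodes may sit. Outside the neighborhoods of the nodes the subdivision is essentially forced, so the problem reduces to classifying which local modifications of the background subdivision are compatible with two nodes.

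Next, I would classify the local dual cells that encode a node, using the description of singular dual subdivisions from \cite{MaMaSh12, DT12}. Each local defect carries a node multiplicity equal to the number of complex nodal cubic surfaces that tropicalize to that local picture. Because we restrict to \emph{separated} nodes, the two defect regions lie in cells with disjoint closures, so the local contributions are independent and the total multiplicity of a tropical binodal surface factors as a product of the two node multiplicities (up to a global factor coming from the background floor pattern).

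With these ingredients I would enumerate all admissible placements of the two node-defects inside the forced background pattern, collecting each resulting subdivision up to the intrinsic symmetries of the configuration. A finite case check should produce exactly $39$ such subdivisions; summing the products of their node multiplicities should then yield the desired $214$. A convenient bookkeeping device is a table listing, for each of the 39 types, the local model at each node, its multiplicity, and the resulting contribution.

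The main obstacle I anticipate is the exhaustiveness and correctness of the case analysis: verifying that the enumeration of admissible placements is complete and non-redundant, and that the local multiplicity formulas from \cite{MaMaSh12, DT12} apply uniformly, including borderline configurations where a node sits near the interface of two regions of the forced background subdivision. A secondary difficulty is confirming that each combinatorial type we list is actually realized by a classical binodal surface through the 17 given points, rather than being merely consistent with the local incidence conditions.
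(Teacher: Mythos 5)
Your overall framework is the right one and matches the paper's: restrict to Mikhalkin position so that the dual subdivision is floor-decomposed, classify the local circuits that encode a node, and sum products of local lifting multiplicities over a finite list of configurations. However, the entire mathematical content of this theorem \emph{is} the finite case check that you defer with ``should produce exactly $39$ \dots should then yield the desired $214$.'' The paper's proof is precisely the organization of that check into five cases according to which floors (cubic, conic, linear) carry the two node germs, with the subtotals $20+24+90+72+8=214$ established in Propositions \ref{prop:31}, \ref{prop:21}, \ref{prop:32}, \ref{prop:22} and \ref{prop:33}. Without carrying out that enumeration, the proposal does not establish either number.

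Two of your working assumptions would also fail if applied uniformly. First, the clean product formula $\text{mult}_{\mathbb{C}}(F)=\prod_j \text{mult}_{\mathbb{C}}(C^{*}_{i_j})$ with the standard local values does not hold verbatim once node germs sit in adjacent or the same floor: for instance, when a left string in the conic aligns so as to remove one intersection point of a weight-two diagonal end of the cubic with the conic, the factor $2(3-1)$ must be corrected to $2(3-2)$ (see the cases (\ref{fig:31_2},\,\ref{fig:21_7}) and (\ref{fig:31_3},\,\ref{fig:21_7})). Second, deciding which candidate configurations actually have \emph{separated} nodes is itself a nontrivial polytope-complex analysis in each case; many candidates (prisms with pyramids, small pentatopes, overlapping weight-two configurations, strings with excess degrees of freedom) must be excluded or set aside as unresolved, and this is exactly why the separated count is $214$ rather than the full $280$. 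Your plan treats separation as a hypothesis to restrict to, rather than a condition that must be verified configuration by configuration and that drives the eliminations (Lemmas \ref{lemma:elimination}, \ref{lem:elim_polytopes}, \ref{lem:elim_22}).
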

\begin{proof}
We distinguish five cases based on which \emph{floors} (Definition \ref{def:floorplan}) of the tropical cubic surface contain the nodes and count with complex multiplicities (Definition \ref{def:multiplicities}). 
$$
214 = \underbrace{20}_{\text{Proposition \ref{prop:31}}} + \underbrace{24}_{\text{Proposition \ref{prop:21}}}
+\underbrace{90}_{\text{Proposition \ref{prop:32}}}
+\underbrace{72}_{\text{Proposition \ref{prop:22}}}
+\underbrace{8}_{\text{Proposition \ref{prop:33}}}.
$$
\end{proof}

\begin{thm}
\label{thm:56}
There exists a point configuration $\omega$ of $17$ real points in $\mathbb{P}^3$ all with positive coordinates, such that there are at least $58$ real binodal cubic surfaces passing through $\omega$.
\end{thm}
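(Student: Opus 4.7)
The plan is to revisit the case analysis underlying Theorem \ref{thm:212}, but to replace the complex multiplicity of Definition \ref{def:multiplicities} with a \emph{real} multiplicity counting only lifts defined over $\mathbb{R}$ with both nodes real. First, I would fix a Mikhalkin-style point configuration $\omega \subset \mathbb{P}^3$ by taking, for each of the 17 required points, a point on the real parametrized curve $t \mapsto (1 : t^{a_1} : t^{a_2} : t^{a_3})$ with $t \in \mathbb{R}_{>0}$ sufficiently large; all such points have positive coordinates, so $\omega$ exists.

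Second, since every real binodal cubic surface through $\omega$ tropicalizes either to one of the $39$ separated-node tropical cubic surfaces enumerated in Theorem \ref{thm:212} or to a configuration with merged nodes (which we discard when computing a lower bound), it suffices to count, for each such tropical surface $S$, the number of real lifts. This is the real analogue of the multiplicity assigned to $S$ in Theorem \ref{thm:212} and plays a role similar to the Welschinger-type signs used in \cite{Mik,BM09} for plane curves. Concretely, one writes down the polynomial system expressing the conditions that a cubic passes through $\omega$, tropicalizes to $S$, and carries real nodes at the two distinguished cells of $S$, and then counts its real solutions.

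Third, I would execute this computation case by case across the five propositions referenced in the proof of Theorem \ref{thm:212}. For each floor plan, the lifting system near each node should reduce, after eliminating the coefficients already pinned down by the point conditions, to a small system whose real solution count is governed by the sign of an explicit discriminant. Collecting the real contributions from each case should produce
\[
58 = r_1 + r_2 + r_3 + r_4 + r_5,
\]
where $r_i$ is the real count from the $i$-th floor plan, drawn respectively from Propositions \ref{prop:31}, \ref{prop:21}, \ref{prop:32}, \ref{prop:22}, and \ref{prop:33}, and each $r_i$ is at most the complex contribution of the corresponding proposition.

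The main obstacle is the case-by-case bookkeeping rather than any unified principle. Two subtleties deserve particular care. First, one must distinguish genuinely real binodal surfaces from those real cubics whose two nodes form a complex-conjugate pair: both tropicalize to a surface in our list, but only the former contributes to the count. Second, the sign of the discriminants controlling the real lifts depends on both the edge lengths of $S$ and the exponents $a_i$ defining $\omega$, so the concrete verification is sensitive to the precise combinatorial setup fixed in Section \ref{sec:background}. The heaviest computation falls on Propositions \ref{prop:32} and \ref{prop:22}, which together account for $162$ of the $214$ complex solutions and therefore dominate any lower bound one can hope to produce by this method.
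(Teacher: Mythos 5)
Your proposal follows essentially the same route as the paper: fix a real point configuration in Mikhalkin position with all positive coordinates and re-run the five-case floor-plan decomposition of Theorem \ref{thm:212}, replacing the complex lifting multiplicities by real ones; the paper simply takes the local real multiplicities ready-made from Definition \ref{def:multiplicities_real} and \cite[Proposition 5.8]{MaMaShSh19} (for the sign vector $s=((+)^3)^n$) instead of re-deriving them from the lifting equations as you suggest. One caveat: several local real multiplicities are undetermined (Section \ref{subsec:real_mult}), so each case only yields a lower bound and the correct conclusion is $58 \leq r_1+\cdots+r_5$, not your exact equality.
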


\begin{proof}
We count the floor plans in Theorem \ref{thm:212} with real multiplicities.
$$
58 \leq \underbrace{\geq 16}_{\text{Proposition \ref{prop:31}}} + \underbrace{\geq 4}_{\text{Proposition \ref{prop:21}}}
+\underbrace{\geq 34}_{\text{Proposition \ref{prop:32}}}
+\underbrace{\geq 4}_{\text{Proposition \ref{prop:22}}}
+\underbrace{\geq 0}_{\text{Proposition \ref{prop:33}}}.
$$
\end{proof}

 As we conduct the counts in Theorems \ref{thm:212} and \ref{thm:56}, we encounter cases with \emph{unseparated} nodes. Here, the two node germs are close together, and so the cells that would normally contain the nodes interact and their topological closures intersect.
  Thus, the node germs interfere with the conditions on producing nodes \cite{MaMaSh18}.
 These cases account for the 66 surfaces missing from our count.
Their dual Newton subdivisions contain
unclassified polytope complexes, which we list in Section \ref{sec:polytopes}.
\\
\ 
\\
\noindent\textbf{Acknowledgements.} The authors thank Hannah Markwig for her explanations, the insightful discussions and her feedback, and Bernd Sturmfels for helpful remarks and recommendations for the improvement of this article.

\section{Tropical Floor Plans}
\label{sec:background}
We now give an overview of counting surfaces using tropical geometry. Let $\mathbb{K} = \cup_{m\geq 1} \mathbb{C}\{t^{1/m}\}$ and $\mathbb{K}_\mathbb{R} = \cup_{m\geq 1} \mathbb{R}\{t^{1/m}\}$. We assume the reader is familiar with tropical hypersurfaces and the corresponding dual subdivision of the Newton polytope as in \cite[Chapter 3.1]{tropicalbook}.

For any 17 generic points in $\mathbb{K}$, the tropicalizations of the 280 binodal cubic surfaces pass through the tropicalizations of the 17 points. However, a bad choice of points might lead to the tropicalizations of the points not being distinct, or not being tropically generic. Furthermore, these surfaces would be difficult to characterize in general.

Luckily, we can choose points in \textit{Mikhalkin position} (Definition \ref{def:pointconfig}). This is a special configuration of points in generic position such that their tropicalizations are tropically generic. Tropical surfaces passing through such points have a very nice form, and the combinatorics of the dual Newton subdivision is well understood.

We can count tropical surfaces through points in Mikhalkin position. Since the algebraic points are generic, we have $280 = \sum_{S} \text{mult}_{\mathbb{C}}(S)$, where we sum over all tropical surfaces $S$ passing through the tropicalized points and $\text{mult}_{\mathbb{C}}(S)$ is the lifting multiplicity of $S$ over $\mathbb{K}$.
At this time, the ways in which two nodes can appear in a tropical surface are not fully understood, so our count is incomplete. Cases we do not understand yet are listed in Section \ref{sec:polytopes}.

We now give the point configuration for counting tropical surfaces.

\begin{dfn}[{\cite[Section 3.1]{MaMaSh18}}]\label{def:pointconfig}
Let $\omega=(p_1,...,p_{17})$ be a configuration of 17 points in $\mathbb{K}^3$ or $\mathbb{K}_{\mathbb{R}}^3$. Let $q_i\in \mathbb{R}^3$ be the tropicalization of $p_i$ for $i=1,...,17$. We say $\omega$ is in \emph{Mikhalkin position} if the $q_i$ are distributed with growing distances along a line $\{\lambda\cdot (1,\eta,\eta^2)|\lambda \in \mathbb{R} \}\subset\mathbb{R}^3$, where $0<\eta\ll1,$ and the $p_i$ are in algebraically generic position. This is possible by \cite{Mik}.
From now on all cubic surfaces are assumed to satisfy point conditions from points in Mikhalkin position.
\end{dfn}

We now summarize the recipe for constructing binodal tropical cubic surfaces through our choice of 17 points.
Given a singular tropical surface $S$ passing through $\omega=(p_1,...,p_{17})$ in Mikhalkin position, each point $p_i$ is contained in the interior of its own 2-cell of $S$ \cite[Remark 3.1]{MaMaSh18}. 
Therefore, we can encode the positions of these points by their dual edges in the Newton subdivision. Marking these edges in the subdivision leads to a path through $18$ of the lattice points in the Newton polytope $\Delta$. Due to our special configuration, this path is always connected for cubics and we only have one step of the path in between each slice of $\Delta$ in the $x$-direction. Therefore we can look at each slice independently, obtaining subdivisions of polytopes dual to curves of degrees 3, 2, and 1.
These are the \emph{floors} of our floor plans (see Definition \ref{def:floorplan}).

By the smooth extension algorithm a floor plan uniquely defines a subdivision of $\Delta$ and therefore gives a unique tropical surface. This assignment is injective \cite[Proposition 5.6]{MaMaShSh19}. Since every tropical surface passing through points in Mikhalkin position is floor decomposed \cite{BeBrLo17}, its dual subdivision can be sliced in the $x$-direction. The resulting floors then give rise to the original surface.

Tropicalizations of singularities leave a mark in the dual subdivision \cite{MaMaSh12}. For our point configuration and chosen degree there are only three circuits in the dual subdivision that give rise to separated nodes, see Figure \ref{fig:circuits}. Circuit A is a pentatope, its dual vertex is the node.
To encode a singularity, circuit D must be part of a bipyramid, Figure \ref{fig:bipyramid}. The node is the midpoint of the edge dual to the parallelogram. 
Circuit E must have at least three neighboring points in special
positions, forming at least two tetrahedra with the edge, Figure \ref{fig:weight2config}. The weighted barycenter of the 2-cell dual to the edge of
length two is the node, where the weight is given by the choice of the three neighbors.
A node germ (Definition \ref{def:nodegerms}) is a feature of a tropical curve appearing in a floor plan which gives one of these circuits in the subdivision dual to the tropical surface.
\begin{figure}[h]
    \centering
    \begin{subfigure}{.19\textwidth}
    \centering
      \includegraphics[height=0.5\textwidth]{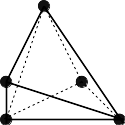}
      \caption{circuit A}\label{fig:pentatope}
    \end{subfigure}{}
    \begin{subfigure}{.19\textwidth}
    \centering
      \includegraphics[height=0.5\textwidth]{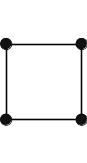}
      \caption{circuit D}\label{fig:circuitD}
    \end{subfigure}{}
     \begin{subfigure}{.19\textwidth}
    \centering
      \includegraphics[height=0.5\textwidth]{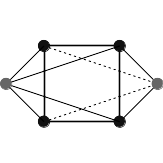}
      \caption{Bipyramid}\label{fig:bipyramid}
    \end{subfigure}{}
    \begin{subfigure}{.19\textwidth}
    \centering
      \includegraphics[height=0.5\textwidth]{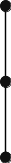}
      \caption{circuit E}\label{fig:circuitE}
    \end{subfigure}{}
     \begin{subfigure}{.19\textwidth}
    \centering
      \includegraphics[height=0.5\textwidth]{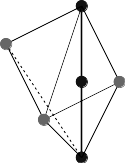}
      \caption{Weight two configuration}\label{fig:weight2config}
    \end{subfigure}
    \caption{Circuits in the dual subdivision.
    }
    \label{fig:circuits}
\end{figure}{}

\begin{dfn}[\cite{MaMaShSh19}, Definition 5.1]\label{def:nodegerms}
Let $C$ be a plane tropical curve of degree $d$ passing through $\binom{d+2}{2}-2$ points in general position. A \textit{node germ} of $C$ of a floor plan of degree 3 is one of the following:
\begin{enumerate}
    \item  a vertex 
    dual to a parallelogram 
    \item a horizontal or diagonal end of weight two,
\item a right or left \emph{string} (see below).
\end{enumerate}
If the lower right (resp. left) vertex of the Newton polytope has no point conditions on the two adjacent ends, we can prolong the adjacent bounded edge in direction $(1,0)$ (resp. $(-1,-1)$) 
and still pass through the points. The union of the two ends is called a \emph{right} (resp. \emph{left}) \emph{string}.
\end{dfn}
\begin{rem}
Now we can explain what our notion of separated nodes means in terms of the dual subdivision: two nodes are separated if they arise from polytope complexes of the form \ref{fig:pentatope}, \ref{fig:bipyramid} or \ref{fig:weight2config}. Any such two complexes might intersect in a unimodular face.
\end{rem}
In \cite{MaMaShSh19} tropical floor plans are introduced to count surfaces satisfying point conditions, similar to the concept of floor diagrams used to count tropical curves.
Their definition of tropical floor plans requires node germs to be separated by a floor, thus neglecting surfaces where the nodes are still separated but closer together, because that is enough to count multinodal surfaces asymptotically \cite[Theorem 6.1]{MaMaShSh19}. 

\begin{dfn}[\cite{MaMaShSh19}, Definition 5.2]\label{def:floorplan}
	Let $Q_i$ be the projection of $q_i$ along the $x$-axis. A \textit{$\delta$-nodal floor plan $F$ of degree $d$} is a tuple $(C_d,...,C_1)$ of plane tropical curves $C_i$ of degree $i$ together with a choice of indices $d\geq i_{\delta}\geq ... \geq i_1\geq1$, such that $i_{j+1}> i_j +1$ for all $j$, satisfying:
	\begin{enumerate}
		\item The curve $C_i$ passes through the following points:\begin{align*}
			&\text{if } i_{\nu}>i>i_{\nu-1}: Q_{\sum_{k=i+1}^d \binom{k+2}{2}-\delta+\nu},...,Q_{\sum_{k=i}^d \binom{k+2}{2}-\delta+\nu} \\
			&\text{if } i=i_{\nu}: Q_{1-\delta+\nu+\sum_{k=i+1}^d \binom{k+2}{2}},...,Q_{-2-\delta+\nu+\sum_{k=i}^d \binom{k+2}{2}-1}.
		\end{align*}
		\item The plane curves $C_{i_j}$ have a node germ for each $j=1,...,\delta.$
		\item If the node germ of $C_{i_j}$ is a left string, then its horizontal end aligns with a horizontal bounded edge of $C_{i_j+1}$.
		\item  If the node germ of $C_{i_j}$ is a right string, then its diagonal end aligns either with a diagonal bounded edge of $C_{i_j-1}$ or with a vertex of $C_{i_j-1}$ which is not adjacent to a diagonal edge.
		\item If $i_{\delta}=d$, then the node germ of $C_d$ is either a right string or a diagonal end of weight two.
		\item If $i_1=1$, then the node germ of $C_1$ is a left string.
	\end{enumerate}
\end{dfn}
\begin{rem}As soon as we allow node germs in adjacent or the same floors, we need to consider an additional alignment condition not occurring if the node germs are separated by floors: A left string in $C_i$ can also align with a vertex of $C_{i+1}$ not adjacent to a horizontal edge.
\end{rem}

\begin{figure}
    \centering
 
    \begin{subfigure}{.32\textwidth}
      \includegraphics[height=0.55\textwidth]{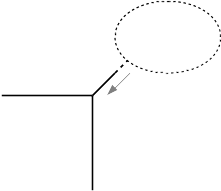}\label{fig:lefstring}
      \caption{Left string}
    \end{subfigure}{}
     \begin{subfigure}{.32\textwidth}
      \includegraphics[height=0.55\textwidth]{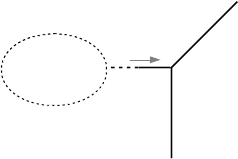}\label{fig:rightstring}
      \caption{Right string}
    \end{subfigure}{}
        \begin{subfigure}{.32\textwidth}
      \includegraphics[height=0.55\textwidth]{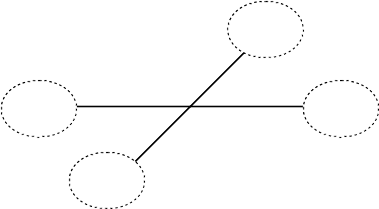}
      \caption{Parallelogram in subdivision dual to floor}\label{fig:parallelogram}
    \end{subfigure}{} 
    \caption{Node germs giving a circuit of type D.}
    \label{fig:paralelo_nodes}
\end{figure}{}

  Figure \ref{fig:paralelo_nodes} shows all node germs which lead to a parallelogram in the subdivision of the Newton polytope. If the node germ in a curve is dual to a parallelogram we have a picture as in Figure \ref{fig:parallelogram}. The right vertex of
the floor of higher degree and the left vertex of the floor of lower
degree form a bipyramid over the parallelogram as in Figure \ref{fig:bipyramid}. The alignment of the horizontal (resp. diagonal) end of the left (resp. right) string with a bounded horizontal (resp. diagonal) edge of a curve of higher (resp. lower) degree in the floor plan translates to the dual vertical (resp. diagonal) edges in the subdivisions forming a parallelogram as in Figure \ref{fig:circuitD}.
 Since the string passes through the two vertices bounding the horizontal (resp. diagonal) edge it aligns with, the dual polytope complex is a bipyramid over the parallelogram. The two top vertices of the pyramids are neighbors of the vertical bounded edge contained in the dual subdivision to the curve of higher (resp. lower) degree dual to the horizontal (resp. diagonal) bounded edge the string aligns with.
 
Figure \ref{fig:pentatopeintersection} shows the alignment of a left string with a vertex not adjacent to a horizontal edge. The 5-valent vertex in this Figure is dual to a pentatope, circuit A, Figure \ref{fig:pentatope}. The analogous alignment of a right string with a vertex not adjacent to a diagonal edge is very rare in our setting, since we consider surfaces of degree 3 and a smooth conic contains no such vertex. The occurring cases in our count are due to node germs in the conic and lead not to a pentatope as in Figure \ref{fig:pentatope}, but to different complexes considered in Section \ref{sec:polytopes}.

Figures \ref{fig:horizontal2}-\ref{fig:diagonal2} show the node germs coming from an undivided edge of length two in the subdivision, as shown in Figure \ref{fig:circuitE}. The node is contained in the dual 2-cell of the length two edge. Every intersection point of the weight two diagonal (resp. horizontal) end with the lower (resp. higher) degree curve of the floor plan can be selected to lift the node \cite{MaMaSh18}. This corresponds in the dual subdivision to choosing those three neighboring vertices which allow forming the polytope complex shown in Figure \ref{fig:weight2config}. With our chosen point condition the neighboring vertex in the dual subdivision of the floor containing the undivided edge is always one of the three neighboring vertices. If the length two edge is diagonal (resp. vertical) the other two vertices have to form a length one edge on the vertical (resp. diagonal) facet of the subdivision dual to the lower (resp. higher) degree curve of the floor plan. 

  \begin{figure}
    \centering
     \begin{subfigure}{.3\textwidth}
      \includegraphics[width=0.9\textwidth]{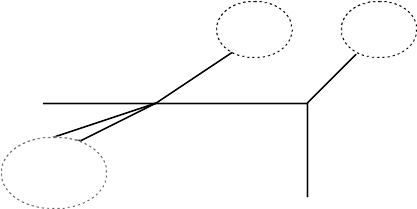}
      \caption{intersection dual to a pentatope}\label{fig:pentatopeintersection}
    \end{subfigure}{}\quad
     \begin{subfigure}{.3\textwidth}
      \includegraphics[width=0.9\textwidth]{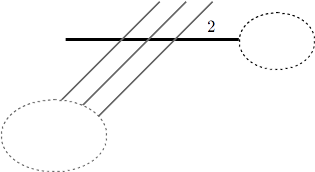}
      \caption{horizontal end of weight two}\label{fig:horizontal2}
    \end{subfigure}{}\quad
     \begin{subfigure}{.3\textwidth}
      \includegraphics[width=0.9\textwidth]{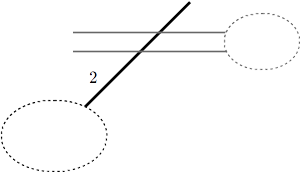}
        \caption{diagonal end of weight two}\label{fig:diagonal2}
    \end{subfigure}{}
    \caption{Node germs leading to circuits of type A and type E.}
    \label{fig:weight2ends}
\end{figure}{}

The complex lifting multiplicity of the node germs in the floors can be determined combinatorially using \cite{MaMaSh18}. 

\begin{dfn}[Definition 5.4, \cite{MaMaShSh19}]\label{def:multiplicities}
Let $F$ be a $\delta$-nodal floor plan of degree $d$. For each node germ $C^{*}_{i_j}$ in $C_{i_j}$, we
define the following local complex multiplicity $\text{mult}_{\mathbb{C}}(C^{*}_{i_j})$:
\begin{enumerate}
\item If $C^{*}_{i_j}$ is dual to a parallelogram, then $\text{mult}_{\mathbb{C}}(C^{*}_{i_j}) =2$.
\item If  $C^{*}_{i_j}$ is a horizontal end of weight two, then $\text{mult}_{\mathbb{C}}(C^{*}_{i_j}) = 2(i_j + 1)$.
\item  If $C^{*}_{i_j}$ is a diagonal end of weight two, then $\text{mult}_{\mathbb{C}}(C^{*}_{i_j}) = 2(i_j - 1)$.
\item If $C^{*}_{i_j}$ is a left string, then $\text{mult}_{\mathbb{C}}(C^{*}_{i_j}) = 2$.
\item  If $C^{*}_{i_j}$ is a right string whose diagonal end aligns with a diagonal bounded edge, then $\text{mult}_{\mathbb{C}}(C^{*}_{i_j}) = 2$.
\item If $C^{*}_{i_j}$ is a right string whose diagonal end aligns with a vertex not adjacent to a diagonal edge, then $\text{mult}_{\mathbb{C}}(C^{*}_{i_j})= 1$.

\end{enumerate}
The multiplicity of a $\delta$-nodal floor plan is  $\text{mult}_{\mathbb{C}}(F) =
\prod_{j=1}\text{mult}_{\mathbb{C}}(C^{*}_{i_j}).$
\end{dfn}

To determine the real multiplicity, we have to fix the signs of the coordinates of the points in $\omega$, as they determine the existence of real solutions of the
initial equations in \cite{MaMaSh18}. We fix the sign vector $s = ((+)^3)^n$.

\begin{dfn}[\cite{MaMaShSh19}, Definition 5.7]\label{def:multiplicities_real} 
For a node germ $C^{*}_{i_j}$ in $C_{i_j}$, we define the local real multiplicity $\text{mult}_{\mathbb{R},s}(C^*_{i_j})$:
\begin{enumerate}
    \item If $C^{*}_{i_j}$ is dual to a parallelogram, it depends on the position of the parallelogram in the Newton subdivision:
\begin{itemize}
    \item if the vertices are $(k, 0$, $(k, 1)$, $(k - 1, l)$ and $(k - 1, l + 1)$, then $$\text{mult}_{\mathbb{R},s}(C^*_{i_j})=\begin{cases} 2  \\
0\end{cases} \hspace{-7pt}\text{if } (\frac{3}{2}i_j + 1+k+l)(i_j-1)\equiv \begin{cases}1 \\ 0 \end{cases} \hspace{-7pt}\text{modulo } 2$$
\item  if the vertices are $(k, d-i_j -k)$, $(k, d-i_j -k -1)$, $(k +1, l)$ and $(k +1, l +1)$,
then
$$\text{mult}_{\mathbb{R},s}(C^*_{i_j})=\begin{cases} 2  \\
0\end{cases} \hspace{-7pt}\text{if } \frac{1}{2}\cdot (i_j + 2+2l)(i_j-1)\equiv \begin{cases}1 \\ 0 \end{cases} \hspace{-7pt}\text{modulo } 2$$
\end{itemize}
\item If $C^{*}_{i_j}$ is a diagonal edge of weight two, $\text{mult}_{\mathbb{R},s}(C^{*}_{i_j})= 2(i_j-1).$
\item If $C^{*}_{i_j}$ is a left string, then it depends on the position of the dual of the horizontal bounded edge of $C_{i_j+1}$ with which it aligns. Assume it has the vertices $(k, l)$ and $(k, l + 1)$. Then
$$\text{mult}_{\mathbb{R},s}(C^{*}_{i_j})=\begin{cases} 2  \\
0\end{cases} \text{ if } i_j-k \equiv \begin{cases}0 \\ 1 \end{cases} \text{modulo } 2$$
\item If $C^{*}_{i_j}$ is a right string whose diagonal end aligns with a a vertex not adjacent to a diagonal edge, then $\text{mult}_{\mathbb{R},s}(C^{*}_{i_j})= 1.$
\end{enumerate}
\end{dfn}

A  tropical $\delta$-nodal surface $S$ of degree $d$ given by a $\delta$-nodal floor plan $F$ 
has at least $\text{mult}_{\mathbb{R},s}(F) = \prod_{j=1}^{\delta} \text{mult}_{\mathbb{R},s}(C^{*}_{i_j})$ real lifts satisfying the point conditions with sign vector $s=((+)^3)^n$ \cite[Proposition 5.8]{MaMaShSh19}.
Several cases are left out of the above definition because the number of real solutions is hard to control. We address this in Section \ref{subsec:real_mult}. This is why we can only give a lower bound of real binodal cubic surfaces where the tropicalization contains separated nodes.

We now count surfaces from the floor plans defined in \cite[Proposition 5.8]{MaMaShSh19}, which have node germs in the linear and cubic floors. Since we adhere exactly to Definition \ref{def:floorplan} the nodes will always be separated.

\begin{prop}
\label{prop:31}
There are $20$ cubic surfaces containing two nodes such that there is one node germ in the cubic floor and one in the linear floor. Of these binodal surfaces at least $16$ are real.
\end{prop}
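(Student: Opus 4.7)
The plan is to enumerate all binodal floor plans $F = (C_3, C_2, C_1)$ whose node germs lie in the cubic and linear floors, i.e.\ with indices $i_2 = 3$ and $i_1 = 1$ in the notation of Definition \ref{def:floorplan}, and then sum the local complex multiplicities from Definition \ref{def:multiplicities}. The constraints sharply restrict the shape of $F$: condition (5) forces the $C_3$ node germ to be either a right string or a diagonal end of weight two, condition (6) forces the $C_1$ node germ to be a left string, and the conic $C_2$ carries no node germ. Because $i_2 - i_1 = 2 > 1$, the two node germs are separated by the intervening conic floor, so separateness is automatic and no exceptional alignment analysis (as in the remark after Definition \ref{def:floorplan}) is needed.

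Next I would fix $C_2$. A tropical conic through $5$ points in Mikhalkin position is uniquely determined and floor decomposed, so its horizontal bounded edges, diagonal bounded edges, and vertices not adjacent to a diagonal edge can be read off directly. The left string of $C_1$ must align with one of the horizontal bounded edges of $C_2$ by condition (3), and if the $C_3$ germ is a right string it must align with a diagonal bounded edge of $C_2$ or with a vertex of $C_2$ not adjacent to a diagonal edge by condition (4). I would then split into three sub-cases: (i) the $C_3$ germ is a diagonal end of weight two; (ii) the $C_3$ germ is a right string aligning with a diagonal bounded edge of $C_2$; (iii) the $C_3$ germ is a right string aligning with such a vertex of $C_2$. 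For each sub-case I would enumerate the admissible triples (choice of $C_3$ through its $8$ points with the prescribed germ, alignment with $C_2$, choice of $C_1$ through its $1$ point) compatible with the Mikhalkin-position point conditions.

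The complex multiplicity contributed by each floor plan is the product of the $C_1$ left-string multiplicity ($2$) and the $C_3$ germ multiplicity: $2(i_j-1) = 4$ in case (i), $2$ in case (ii), and $1$ in case (iii). Summing these products over the enumerated floor plans should recover the count $20$. For the real bound I would substitute the local multiplicities from Definition \ref{def:multiplicities_real}: the left-string contribution becomes $0$ or $2$ according to the parity condition $i_j - k \equiv 0 \pmod 2$ on the aligning horizontal edge of $C_2$, and the right-string-to-vertex contribution stays at $1$, while weight-two diagonal ends always contribute $2(i_j-1)$. Tallying the surviving floor plans yields the lower bound of at least $16$ real surfaces. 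The main obstacle is the enumeration itself: making sure every compatible alignment is counted exactly once, particularly the rarer right-string/vertex alignment in case (iii), and checking that each enumerated $C_3$ really passes through its designated $8$ points with the specified germ while remaining tropically generic.
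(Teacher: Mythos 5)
Your setup matches the paper's proof exactly: the node germs are forced to be a left string in $C_1$ and either a right string or a diagonal weight-two end in $C_3$, the conic is smooth, separateness is automatic, and the count is a product of the local multiplicities from Definition \ref{def:multiplicities}. However, the substance of the proof is the enumeration, and you explicitly defer it ("summing these products \dots should recover the count 20"), so the numbers $20$ and $16$ are never actually derived. To close the gap you need three concrete facts. First, your sub-case (iii) is empty: the unique triangulation dual to a smooth conic through its five points has no vertex that is not adjacent to a diagonal edge, so a right string in $C_3$ can only align with the diagonal bounded edge of $C_2$, contributing multiplicity $2$. Second, there is exactly one dual subdivision of the cubic floor carrying a right string but two distinct subdivisions carrying a diagonal end of weight two (the length-two edge can sit in two positions on the diagonal facet), so the tally is $2\cdot 2 + 2\cdot 4 + 2\cdot 4 = 20$ over three floor plans, not an unspecified sum. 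Third, the left string of $C_1$ always aligns with the unique horizontal bounded edge of the smooth conic, dual to the edge with vertices $(1,0)$ and $(1,1)$, which is what makes its complex multiplicity $2$ and its real multiplicity $2$ (the parity condition $i_j-k\equiv 0$ holds with $i_j=1$, $k=1$).

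One further correction on the real bound: you write that the right-string contribution "stays at $1$," but that value applies only to the vacuous vertex-alignment case. For a right string aligning with a diagonal bounded edge the real multiplicity is not covered by Definition \ref{def:multiplicities_real} at all (see Section \ref{subsec:real_mult}), so that floor plan contributes nothing certifiable to the real count. The bound $16$ comes entirely from the two weight-two floor plans, each contributing $2(3-1)\cdot 2 = 8$ real lifts.
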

\begin{proof} 
By Definition \ref{def:floorplan} 
a floor plan consists of a cubic curve $C_3$, a conic $C_2$, and a line $C_1$, where the tropical curves $C_3$ and $C_1$ contain node germs. Recall that the notation $C_i^*$ stands for the node germ in $C_i$. By Definition \ref{def:floorplan} (6) the node germ of $C_1$ is a left string as in Figure \ref{fig:31_line}, which always aligns with the horizontal bounded edge in $C_2$, so $\text{mult}_{\mathbb{C}}(C_1^*) = 2$. The node germs in $C_3$ possible by Definition \ref{def:floorplan} (5) are depicted in Figures \ref{fig:31_1}-\ref{fig:31_3} and each one gives a different floor plan.

\begin{enumerate}
    \item[(\ref{fig:31_1})] There is a right string in the cubic floor. In the smooth conic, there is no vertex which is not adjacent to a diagonal edge. So, the right string of the cubic must align with the diagonal bounded edge. This gives
    $\text{mult}_{\mathbb{C}}(F) = \text{mult}_{\mathbb{C}}(C_3^*) \cdot \text{mult}_{\mathbb{C}}(C_1^*) = 2 \cdot 2 = 4$.
    In this case, $\text{mult}_{\mathbb{R},s}(F)$ is undetermined, see Section \ref{subsec:real_mult}.
        \item[(\ref{fig:31_2}, \ref{fig:31_3})] The cubic has a weight two diagonal end. We have $2 \cdot \text{mult}_{\mathbb{C}}(F) = 2 \cdot \text{mult}_{\mathbb{C}}(C_3^*) \cdot \text{mult}_{\mathbb{C}}(C_1^*) = 2 \cdot(2(3-1) \cdot 2) = 16$. 
    By Definition \ref{def:multiplicities_real} (3) the real multiplicity of the left string depends on coordinates of the dual of the edge it aligns with: $(1,0)$ and $(1,1)$. This gives
    $2 \cdot \text{mult}_{\mathbb{R},s}(F) = 2 \cdot \text{mult}_{\mathbb{R},s}(C_3^*) \cdot \text{mult}_{\mathbb{R},s}(C_1^*) = 2 \cdot(2(3-1) \cdot 2) = 16$.
\end{enumerate}

\end{proof}
Notice that having node germs separated by a floor only accounts for 20 of the 280 tropical cubic surfaces through our 17 points. As we will show, our extension of Definition \ref{def:floorplan} captures many more surfaces.
\begin{figure}[h]
\centering
\begin{subfigure}{.245\textwidth}
  \centering
  \includegraphics[height = 0.9 in]{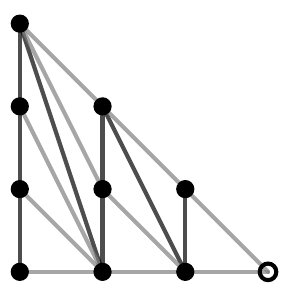}
   \caption{$\ $}
  \label{fig:31_1}
\end{subfigure}%
\begin{subfigure}{.245\textwidth}
  \centering
  \includegraphics[height = 0.9 in]{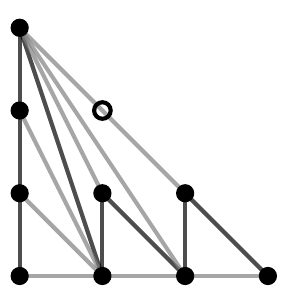}
  \caption{$\ $}
  \label{fig:31_2}
\end{subfigure}%
\begin{subfigure}{.245\textwidth}
  \centering
  \includegraphics[height = 0.9 in]{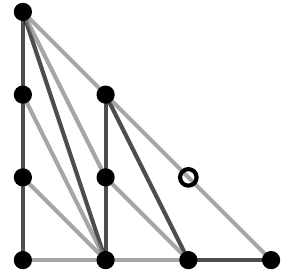}
   \caption{$\ $}
  \label{fig:31_3}
\end{subfigure}%
 \begin{subfigure}{0.245\textwidth}
  \centering
  \includegraphics[height = 0.9 in]{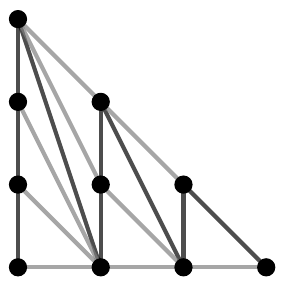}
  \caption{$\ $}\label{fig:21_cubic}
\end{subfigure}%
\caption{The triangulation dual to a smooth cubic floor and the three possible subdivisions dual to a cubic tropical curve 
	with one node germ.}\label{fig:31_triangulations}
\end{figure}

\begin{figure}[h]
\centering
\begin{subfigure}{.35\textwidth}
  \centering
  \includegraphics[width=.4\linewidth]{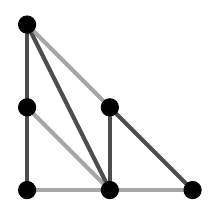}
  \caption{A triangulation dual to a smooth conic}
  \label{fig:31_conic}
\end{subfigure}
\begin{subfigure}{.35\textwidth}
  \centering
  \includegraphics[width=.35\linewidth]{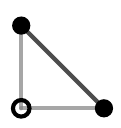}
  \caption{Left string line}
  \label{fig:31_line}
\end{subfigure}%
\caption{The triangulations dual to linear and conic curves appearing as part of a floor plan of a nodal cubic surface.}
\label{fig:line_conic}
\end{figure}

\section{Nodes in adjacent floors}

We now study cases where node germs are in adjacent floors of the floor plan, extending Definition \ref{def:floorplan}, and check that the nodes are separated.

\begin{lemma}\label{lemma:weight2andbipyramid}
If a floor plan contains a diagonal or horizontal weight two end and a second node germ leading to a bipyramid in the subdivision, such that the bipyramid does not contain the weight two end, the nodes are separated. 
\end{lemma}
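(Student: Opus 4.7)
The plan is to work with the duality between the tropical surface and the Newton subdivision, and to show directly that the closures of the two cells of the tropical surface that carry the nodes cannot meet. Let $E_1$ denote the undivided length-two edge in the Newton subdivision associated with the weight-two end; the first node lies in the interior of the $2$-cell $C_1$ of the tropical surface dual to $E_1$. Let $P_2$ be the parallelogram forming the equator of the bipyramid $B$, and let $e_2$ be the edge of the tropical surface dual to $P_2$, so that the second node is the midpoint of $e_2$. The hypothesis is that $E_1 \not\subset B$, and the goal is to show $\overline{C_1} \cap \overline{e_2} = \emptyset$.

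Since the tropical surface is a polyhedral complex, the closures of any two cells are either disjoint or share a common face. The closure $\overline{C_1}$ consists of $C_1$ together with its bounding edges and vertices, and $\overline{e_2}$ consists of $e_2$ together with its two endpoints. Comparing dimensions, the only two ways these closures can meet are: (a) $e_2$ is a boundary edge of $C_1$; or (b) an endpoint of $e_2$ is a bounding vertex of $C_1$. I would dualize each case to reach a contradiction with the hypothesis.

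In case (a), the inclusion $e_2 \subset \overline{C_1}$ dualizes to $E_1 \subset P_2$, and since $P_2 \subset B$ we obtain $E_1 \subset B$, contradicting the hypothesis. In case (b), a shared vertex of $\overline{C_1}$ and $\overline{e_2}$ corresponds to a $3$-cell $Q$ of the Newton subdivision satisfying both $P_2 \subset Q$ (as a $2$-face) and $E_1 \subset Q$ (as an edge). Here I would use the construction of $B$ from the right vertex of the higher-degree floor and the left vertex of the lower-degree floor: $B$ decomposes into two pyramids $P^+$ and $P^-$ over $P_2$, and these are the only two $3$-cells of the subdivision having $P_2$ as a $2$-face. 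Since $P^\pm \subset B$, such a $Q$ must lie inside $B$, and again the hypothesis gives $E_1 \not\subset Q$, a contradiction.

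The only nonformal step is the identification of the $3$-cells adjacent to $P_2$, i.e.\ verifying that the Newton subdivision around $P_2$ has exactly the two pyramid halves of $B$ as neighbors; this is where I expect the slight technical care to lie, but it is forced by the floor-plan construction recalled in Section~\ref{sec:background} (the apex of each pyramid is pinned down by the adjacent floor). Once this is in hand, the two case analyses above complete the proof and establish that the nodes are separated.
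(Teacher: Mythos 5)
Your argument is correct and, on the combinatorial side, is essentially a carefully dualized version of what the paper disposes of in one sentence (``the bipyramid and the weight two end share at maximum one vertex''): you translate the definition of separatedness ($\overline{C_1}\cap\overline{e_2}=\emptyset$) through the order-reversing duality, reduce to the two possible common faces, and observe that both would force a $3$-cell containing $P_2$ --- hence one of the two pyramids of $B$ --- to contain the length-two edge $E_1$, contradicting $E_1\not\subset B$. The identification of the two $3$-cells adjacent to $P_2$ that you flag as the nonformal step is indeed harmless: $P_2$ is an interior $2$-face of the subdivision, so it has exactly two adjacent $3$-cells, and the hypothesis that the node germ produces a bipyramid means precisely that these are the two pyramids of $B$. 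The one piece of the paper's proof you do not reproduce is its second half: the full weight-two polytope complex (the length-two edge together with its chosen neighboring vertices and the tetrahedra they span) may still overlap the bipyramid in more than a point, and the paper explicitly notes that this overlap causes no obstruction to the lifting conditions of \cite{MaMaSh18}, i.e.\ that a binodal surface realizing both node germs simultaneously actually exists. Since the lemma is later used to multiply local lifting multiplicities, you should add a sentence addressing this non-interference of the two sets of initial conditions; your purely topological argument establishes separatedness as literally defined but not this existence statement.
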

\begin{proof}
The bipyramid and the weight two end share at maximum one vertex. The neighboring points of the weight two end can be part of the bipyramid. This causes no obstructions to the conditions in \cite{MaMaSh18} for the existence of a binodal surface tropicalizing to this. 
\end{proof}

\begin{lemma}\label{lemma:elimination}
If a 
floor plan has separated nodes, $C_2$ cannot have a right string.
\end{lemma}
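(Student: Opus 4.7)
My plan is a proof by contradiction: assume $C_2$ carries a right string in a floor plan realizing separated nodes, and rule out every possible alignment of the string's diagonal end with the floor below.

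The first step is to unfold the alignment requirement of Definition \ref{def:floorplan}(4) applied to $i_j = 2$: the prolonged diagonal end of the right string in $C_2$ must align either with a diagonal bounded edge of $C_1$ or with a vertex of $C_1$ that is not adjacent to a diagonal edge. Since $C_1$ is a standard tropical line, it consists of a single trivalent vertex with three unbounded rays (horizontal, vertical, diagonal) and has no bounded edges whatsoever. Hence the first alignment option is vacuous, and the second fails because the unique vertex of $C_1$ is adjacent to its diagonal ray. I would also observe that the extension of the floor plan framework to adjacent floors, summarized in the remark following Definition \ref{def:floorplan}, introduces new alignments only for left strings, so no additional option is available to a right string.

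This leaves only the degenerate possibility that the diagonal end of the right string is placed so as to meet the vertex of $C_1$ despite that vertex being adjacent to a diagonal edge. Here I would apply the smooth extension algorithm to describe the local piece of the dual Newton subdivision produced by the right string together with the neighboring vertex of $C_1$, and compare it against the three separated-node circuits displayed in Figure \ref{fig:circuits}: the pentatope (A), the bipyramid (D), and the weight-two configuration (E). The goal is to show that the resulting complex is none of these three, and in particular that it intersects the complex carrying the second node germ in more than a unimodular face, so the two nodes are not separated in the sense explained in the remark following Figure \ref{fig:circuits}.

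The main obstacle I anticipate is carrying out this last local analysis: one must examine the slice of the Newton tetrahedron $\Delta$ between $x = 1$ and $x = 2$ where the right string of $C_2$ meets the line floor, and verify that the resulting polytopes coincide with one of the unclassified, unseparated complexes collected in Section \ref{sec:polytopes}. Once the geometric picture is pinned down, the contradiction with separation of nodes is immediate and the lemma follows.
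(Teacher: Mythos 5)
Your argument is correct and is essentially the paper's own proof: Definition \ref{def:floorplan}(4) forces the right string of $C_2$ to align with a diagonal bounded edge of $C_1$ or with a vertex of $C_1$ not adjacent to a diagonal edge, and a tropical line has no bounded edges while its unique vertex is adjacent to its diagonal ray, so neither option is available. The additional ``degenerate'' case you propose to analyze via the dual subdivision is not needed, since such a placement is simply not an allowed alignment under the definition.
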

\begin{proof}
By Definition \ref{def:floorplan} (4) a right string in $C_2$ would have to align with a diagonal bounded edge of $C_1$ or with a vertex of $C_1$ not adjacent to a diagonal edge. Since $C_1$ is a tropical line, both cases can never occur.
\end{proof}

We now give the lemma used to eliminate cases with complexes in the Newton subdivision that cannot accommodate two nodes.
\begin{lemma}
\label{lem:elim_polytopes}
Let $\Delta \subset \mathbb{Z}^3$ be finite, and let $B_\Delta$ be the variety of binodal hypersurfaces with defining polynomial having support $\Delta$. If the dimension of $B_\Delta$ is less than $|\Delta| - 3$, then any tropical surface whose dual subdivision consists of unimodular tetrahedra away from $\Delta$ is not the tropicalization of a complex binodal cubic surface.
\end{lemma}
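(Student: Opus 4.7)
The plan is a parameter count. Suppose for contradiction that some tropical surface $S$ through the 17 tropical points, with dual subdivision $\Sigma$ consisting of unimodular tetrahedra outside the complex $\Delta$, is the tropicalization of a complex binodal cubic $V(f)$ passing through the corresponding 17 algebraic points. The first step is to localize the nodes inside the subdivision: each node of $V(f)$ tropicalizes into a 2-cell of $S$ dual to one of the non-unimodular singularity-producing circuits of Figure \ref{fig:circuits} (a pentatope, bipyramid, or weight-two configuration). Since every polytope of $\Sigma$ outside $\Delta$ is a unimodular tetrahedron, both nodes are captured within $\Delta$, and by the lifting criterion for tropical singular hypersurfaces \cite{MaMaSh12,MaMaSh18} the initial polynomial $f_0|_\Delta$ obtained by restricting $f$ to the lattice points of $\Delta$ defines a binodal hypersurface in the torus; equivalently $[f_0|_\Delta] \in B_\Delta$.

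Next, I would introduce the rational projection $\pi \colon \mathbb{P}^{19} \to \mathbb{P}^{|\Delta|-1}$ obtained by forgetting the $20 - |\Delta|$ coefficients at the lattice points of the cubic Newton polytope not in $\Delta$. Its generic fibers have dimension $19 - (|\Delta|-1) = 20 - |\Delta|$, so the preimage $\pi^{-1}(B_\Delta) \subset \mathbb{P}^{19}$ has dimension $\dim B_\Delta + (20 - |\Delta|)$. By the first step, any binodal cubic through the 17 points tropicalizing to $S$ must lie in $\pi^{-1}(B_\Delta)$.

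The 17 algebraic point conditions $f(p_i)=0$ are 17 linear constraints on $\mathbb{P}^{19}$. Because the $p_i$ are in algebraically generic position, these hyperplanes are generic and intersect $\pi^{-1}(B_\Delta)$ in a locus of dimension at most
\[
\dim B_\Delta + (20 - |\Delta|) - 17 \;=\; \dim B_\Delta - (|\Delta|-3).
\]
The hypothesis $\dim B_\Delta < |\Delta|-3$ makes this dimension strictly negative, so the intersection is empty and no binodal lift of $S$ through the 17 points exists, proving the lemma.

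The main obstacle is the first step: rigorously justifying that the initial polynomial $f_0|_\Delta$ lands in $B_\Delta$ whenever $V(f)$ is binodal and tropicalizes with non-unimodular part equal to $\Delta$. This rests on the localization of tropical singularities established in \cite{MaMaSh12,MaMaSh18}: singular behavior of $V(f)$ is detected by non-smoothness of the initial polynomials on the non-unimodular cells of the subdivision, while unimodular tetrahedra contribute only smooth local strata. Once this is granted the dimension bookkeeping is routine.
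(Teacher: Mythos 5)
Your argument is essentially the paper's: both reduce the claim to the observation that a binodal lift with this dual subdivision would force a point of $B_\Delta$ to satisfy $|\Delta|-3$ effective conditions, which is impossible when $\dim B_\Delta < |\Delta|-3$. The paper phrases this by pushing the point conditions down to a surface with support $\Delta$, while you pull $B_\Delta$ back to $\mathbb{P}^{19}$ and intersect with all $17$ generic hyperplanes there --- the same dimension count --- and both versions delegate the same key localization step (that binodality of $V(f)$ forces its $\Delta$-part into $B_\Delta$) to the lifting theory of Markwig--Markwig--Shustin, exactly as the paper does.
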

\begin{proof}
If a binodal cubic surface had such a triangulation and satisfied our point conditions, then we could obtain from it a binodal surface with support $\Delta$ satisfying $|\Delta|-3$ point conditions. 
Therefore, if the dimension of $B_\Delta$ is less than $|\Delta|-3$ we do not expect any such surfaces to satisfy $|\Delta|-3$ generic point conditions.
\end{proof}

\begin{prop}
\label{prop:21}
There are $24$ cubic surfaces containing two nodes such that the tropical cubic has two separated nodes and the corresponding node germs are contained in the conic and linear floors. Of these, at least $4$ are real.
\end{prop}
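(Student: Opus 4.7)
The plan is to enumerate admissible floor plans with node germs in both $C_2$ and $C_1$, verify separation of the two nodes, and sum complex and real multiplicities in the spirit of the proof of Proposition \ref{prop:31}.

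I would first narrow down the possibilities. By Definition \ref{def:floorplan}(6) the node germ $C_1^*$ is a left string, and its alignment with a horizontal bounded edge of $C_2$ contributes $\text{mult}_{\mathbb{C}}(C_1^*) = 2$ by Definition \ref{def:multiplicities}(4). By Lemma \ref{lemma:elimination}, the node germ $C_2^*$ is not a right string. Hence $C_2^*$ belongs to one of four types: dual to a parallelogram, a horizontal end of weight two, a diagonal end of weight two, or a left string aligning with a horizontal bounded edge of $C_3$ (where the generalized alignment from the remark after Definition \ref{def:floorplan} may also need to be considered).

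For each type I would (i) enumerate the subdivisions of the conic's Newton polygon and the positions of $C_2^*$ compatible with the five point conditions on $C_2$ and with the alignment constraints linking $C_2$ to $C_3$ and $C_1$ to $C_2$; (ii) verify separation of the two nodes, using Lemma \ref{lemma:weight2andbipyramid} in the weight-two cases and inspecting for the parallelogram and left-string cases that the bipyramid attached to $C_2^*$ meets the bipyramid from $C_1^*$ in at most a unimodular face; (iii) compute the complex multiplicity via Definition \ref{def:multiplicities}, so that each admissible floor plan contributes $\text{mult}_{\mathbb{C}}(C_2^*) \cdot 2$, where $\text{mult}_{\mathbb{C}}(C_2^*)$ equals $2$ for a parallelogram, $2(2+1) = 6$ for a horizontal weight-two end, $2(2-1) = 2$ for a diagonal weight-two end, and $2$ for a left string. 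Summing these contributions over all admissible floor plans yields $24$.

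For the real count I would apply Definition \ref{def:multiplicities_real} with sign vector $s = ((+)^3)^n$. The diagonal weight-two end unconditionally gives real multiplicity $2(i_j-1) \cdot 2 = 4$. The parallelogram and left-string cases give $0$ or $4$ depending on parities of lattice coordinates of the relevant edges, hence $\geq 0$; the horizontal weight-two case is not covered by Definition \ref{def:multiplicities_real} and also contributes $\geq 0$. Adding these lower bounds gives at least $4$ real binodal surfaces, as claimed.

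The main obstacle will be step (ii) in combination with the enumeration (i): for the parallelogram and left-string configurations of $C_2^*$ the associated bipyramid sits adjacent to the bipyramid of $C_1^*$, so one must verify in each admissible subdivision that the two complexes share at most a unimodular face; otherwise the case falls into the unseparated analysis of Section \ref{sec:polytopes} and is excluded from this count. A secondary subtlety is to ensure the enumeration of admissible positions, in particular the set of horizontal bounded edges of $C_3$ that a left string of $C_2$ can align with, is exhaustive and correctly reproduces the total of $24$.
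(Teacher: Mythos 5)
Your outline correctly identifies the structure (left string in $C_1$ forced by Definition \ref{def:floorplan}(6), no right string in $C_2$ by Lemma \ref{lemma:elimination}, then a case analysis over the remaining node-germ types for $C_2$), but the case analysis itself is resolved the wrong way in two places, and as a consequence neither the complex total of $24$ nor the source of the $4$ real surfaces comes out as you describe.

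First, the parallelogram cases for $C_2^*$ do \emph{not} contribute to this count. You flag the danger yourself in your ``main obstacle,'' but the resolution is that all of them fail: the only horizontal bounded edge of $C_2$ with which the left string of $C_1$ can align is dual to a face of the parallelogram itself, so the alignment bipyramid and the parallelogram circuit merge into a prism with two attached pyramids --- an unclassified complex with unseparated nodes, deferred to Section \ref{sec:polytopes}. Second, the diagonal weight-two end in $C_2$ is eliminated outright: that conic has no horizontal bounded edge at all, so the left string must align with the vertex not adjacent to a horizontal edge, which is dual to the area-two triangle containing the length-two edge; the resulting volume-two pentatope cannot hold two nodes by Lemma \ref{lem:elim_polytopes}. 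The surviving contributions are only the horizontal weight-two end ($2\cdot 2(2+1)=12$) and the left string in $C_2$ ($3$ alignments with horizontal bounded edges of $C_3$, each of multiplicity $2\cdot 2=4$, giving $12$), for a total of $24$. With your inclusion of the parallelogram and diagonal weight-two cases the sum would not be $24$, so the claim ``summing these contributions yields $24$'' cannot be completed as stated. Finally, the real count: you attribute the $4$ real surfaces to the diagonal weight-two end, but that case does not occur; the $4$ real lifts come from the left-string-in-$C_2$ case, where exactly one of the three alignments (the one whose dual edge has $x$-coordinate $k=2$ in the cubic floor) has nonzero real multiplicity, while the horizontal weight-two case has undetermined real multiplicity and contributes nothing to the lower bound.
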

\begin{proof} 
Here a floor plan consists of a smooth cubic curve $C_3$ (see Figure~\ref{fig:21_cubic}), a conic $C_2$ and a line $C_1$, both with a node germ.
The node germ of $C_1$ is by Definition \ref{def:floorplan} (6) a left string, see Figure \ref{fig:31_line}.
For $C_2$ all possibilities from  Definition~\ref{def:nodegerms} are depicted in Figure \ref{fig:21_triangulations}.
We examine all choices for the floor plan $F$ and check whether the nodes are separated.
\begin{enumerate}[(A)]
    \item[(\ref{fig:21_1})-(\ref{fig:21_3})] By Definition \ref{def:floorplan} (3) the left string in $C_1$ must align with the horizontal bounded edge of $C_2$, which is dual to a face of the parallelogram in the subdivision. We obtain a prism polytope between the two floors, and by completion of the subdivision, we get two pyramids sitting over those two rectangle facets of the prism, that are not on the boundary of the Newton polytope.
   This complex may hold two nodes, see Section \ref{sec:polytopes}.
    \item[(\ref{fig:21_4})] By Definition \ref{def:floorplan} (3), the left string of $C_1$ aligns with the horizontal bounded edge of $C_2$, giving a bipyramid in the subdivision, with top vertices the neighbors to the dual of the bounded diagonal edge in $C_2$. 
    The length two edge dual to the horizontal weight two end is surrounded by tetrahedra that only intersect the bipyramid in a face. 
    So, the nodes are separated and we count their multiplicities: 
    $\text{mult}_\mathbb{C}(F) = \text{mult}_\mathbb{C}(C_1^*) \cdot \text{mult}_\mathbb{C}(C_2^*) = 2 \cdot 2(2+1) = 12$.
    In this case, $\text{mult}_{\mathbb{R},s}(F)$ is undetermined, see Section \ref{subsec:real_mult}.
    \item[(\ref{fig:21_5})] The left string in $C_1$ must align with the vertex in $C_2$ not adjacent to a horizontal edge, but this vertex is dual to the area two triangle in the subdivision. The resulting volume two pentatope contains the neighbors of the length two edge and is by Lemma \ref{lem:elim_polytopes} not big enough to hold two nodes.
    \item[(\ref{fig:21_7})] 
   The left strings in $C_1$ and $C_2$ lead to two bipyramids in the subdivision.
    For each of the 3 alignment possibilities of the left string in $C_2$, the resulting bipyramids 
    are disjoint and the nodes separate. We get $3 \cdot \text{mult}_\mathbb{C}(F) = 3 \cdot \text{mult}_\mathbb{C}(C_1^*) \cdot \text{mult}_\mathbb{C}(C_2^*) = 3 \cdot (2\cdot 2) = 12$.
    By Definition \ref{def:multiplicities_real} (3) we need to consider the positions of the dual edges the left strings align with in order to compute the real multiplicities. The left string in $C_1$ aligns with the edge given by the vertices $(1,0),(1,1)$ in the conic floor, it has $\text{mult}_{\mathbb{R},s}(C_1^*) = 2$. For the conic, two of the 3 choices have $x$-coordinate $k=1$ in the cubic floor, 
    so $\text{mult}_{\mathbb{R},s}(C_2^*) = 0$. The last alignment is dual to $x$-coordinate $k=2$,
    so we have $\text{mult}_{\mathbb{R},s}(C_2^*) = 2$. We obtain $\text{mult}_{\mathbb{R},s}(F) = 4$.
\end{enumerate}
\end{proof}

\begin{figure}[h]
\centering
\begin{subfigure}{.14\textwidth}
  \centering
  \includegraphics[width=.9\linewidth]{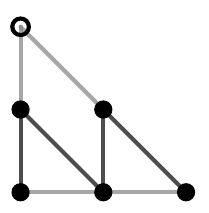}
  \caption{$ $}
  \label{fig:21_1}
\end{subfigure}%
\begin{subfigure}{.14\textwidth}
  \centering
  \includegraphics[width=.9\linewidth]{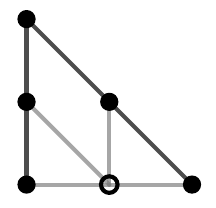}
  \caption{$ $}
  \label{fig:21_2}
\end{subfigure}%
\begin{subfigure}{.14\textwidth}
  \centering
  \includegraphics[width=.9\linewidth]{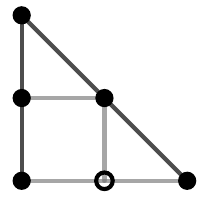}
  \caption{$ $}
  \label{fig:21_3}
\end{subfigure}%
\begin{subfigure}{.14\textwidth}
  \centering
  \includegraphics[width=.9\linewidth]{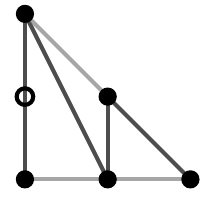}
  \caption{$ $}
  \label{fig:21_4}
\end{subfigure}%
\begin{subfigure}{.14\textwidth}
  \centering
  \includegraphics[width=.9\linewidth]{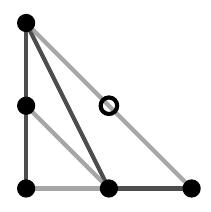}
  \caption{$ $}
  \label{fig:21_5}
\end{subfigure}
\begin{subfigure}{.14\textwidth}
  \centering
  \includegraphics[width=.9\linewidth]{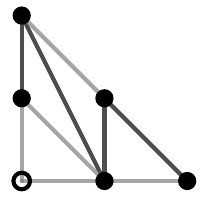}
  \caption{$ $}
  \label{fig:21_7}
\end{subfigure}%

\caption{The possible subdivisions dual to a tropical conic curve with one node germ appearing as part of a floor plan of a nodal cubic surface.}
\label{fig:21_triangulations}
\end{figure}

\begin{prop} 
\label{prop:32}
There are $90$ cubic surfaces containing two nodes such that the tropical binodal cubic has separated nodes and the node germs are contained in the cubic and conic floors. Of these, at least $34$ are real.
\end{prop}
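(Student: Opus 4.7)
The plan is to enumerate all floor plans $F=(C_3,C_2,C_1)$ where $C_3$ and $C_2$ each contain one node germ, and $C_1$ is a smooth tropical line (Figure \ref{fig:31_line}). The node germs in $C_3$ are the three options in Figures \ref{fig:31_1}--\ref{fig:31_3}, and the node germs in $C_2$ are those in Figures \ref{fig:21_1}--\ref{fig:21_7}, with right strings eliminated by Lemma \ref{lemma:elimination}. This gives a finite grid of pairs to analyze. For each pair I would first apply the alignment conditions of Definition \ref{def:floorplan} (3)--(5) and count how many realizations of the floor plan exist, then describe the dual complex obtained by the smooth extension algorithm, and finally check separation of nodes.

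For separation I would rely on Lemma \ref{lemma:weight2andbipyramid} whenever one of the germs is a weight-two end and the other produces a bipyramid, since it guarantees that the two circuits meet only in a unimodular face. For the pairings where $C_2$ has a parallelogram-type node germ (Figures \ref{fig:21_1}--\ref{fig:21_3}), the bipyramid that would sit between $C_2$ and $C_1$ is replaced by a prism with two attached pyramids, exactly as in the analysis following Proposition \ref{prop:21}; these cases do not satisfy our separation criterion and must be deferred to Section \ref{sec:polytopes} rather than counted here. For the pairing where $C_2$ has a weight-two diagonal end and $C_1$'s string must align with the dual area-two triangle, Lemma \ref{lem:elim_polytopes} eliminates the configuration just as in case (\ref{fig:21_5}) of Proposition \ref{prop:21}. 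The surviving cases are those where $C_2$ carries a horizontal weight-two end (\ref{fig:21_4}) or a left string (\ref{fig:21_7}), paired with any of the three germs in $C_3$.

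Once separation is certified, I would compute the complex multiplicity as $\text{mult}_\mathbb{C}(F)=\text{mult}_\mathbb{C}(C_3^*)\cdot\text{mult}_\mathbb{C}(C_2^*)$ via Definition \ref{def:multiplicities}, multiplied by the number of valid alignment choices (a left string in $C_2$ has three possible horizontal edges of $C_3$ to align with, and a right string in $C_3$ may align with a diagonal edge or vertex of $C_2$). Summing the contributions of the surviving subcases should yield $90$. For the real count I would apply Definition \ref{def:multiplicities_real} to each separated subcase, noting that a right string in $C_3$ aligning with a diagonal bounded edge of $C_2$ is undetermined and is omitted from the lower bound; the remaining contributions, governed by the parities in Definition \ref{def:multiplicities_real} (1) and (3) and by the fixed weight-two diagonal multiplicity in (2), should sum to at least $34$.

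The main obstacle is bookkeeping the alignment data together with the parity conditions: the left string in $C_2$ can align with three different horizontal edges of $C_3$, each giving a distinct parallelogram whose vertex coordinates $(k,l),(k,l+1)$ feed into the formulas of Definition \ref{def:multiplicities_real}, and some of these vanish modulo $2$ while others contribute $2$. A secondary difficulty is correctly separating the parallelogram-in-$C_2$ subcases from the weight-two and string subcases: the former truly produce a polytope complex that cannot be resolved by Lemmas \ref{lemma:weight2andbipyramid} or \ref{lem:elim_polytopes} alone and belong to the deferred cases, so care is needed to ensure they are excluded from the present count rather than spuriously contributing to $90$.
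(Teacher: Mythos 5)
Your enumeration framework (a $3\times 6$ grid of germ pairs, separation checks, then Definitions \ref{def:multiplicities} and \ref{def:multiplicities_real}) matches the paper's, but your triage of which pairs survive is essentially backwards, and the error stems from importing the analysis of Proposition \ref{prop:21} into a setting where its premise fails. In Proposition \ref{prop:21} the line $C_1$ carries a left string whose forced alignment with the horizontal bounded edge of $C_2$ is what degenerates the bipyramid into a prism (for the parallelogram germs) or forces the alignment with the area-two triangle (for the diagonal weight-two end). Here $C_1$ is \emph{smooth}: there is no left string in $C_1$, so neither obstruction occurs. Consequently the parallelogram germs (\ref{fig:21_1})--(\ref{fig:21_3}) and the diagonal weight-two end (\ref{fig:21_5}) in $C_2$, paired with the weight-two ends (\ref{fig:31_2})--(\ref{fig:31_3}) in $C_3$, \emph{do} give separated nodes (by Lemma \ref{lemma:weight2andbipyramid} and by direct inspection of the two weight-two configurations), and they account for $16+8+16+8+16+4=68$ of the $90$. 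The prism complex only appears when the \emph{right string of $C_3$} aligns with the diagonal edge of $C_2$ dual to a face of the parallelogram, i.e.\ in (\ref{fig:31_1},\ref{fig:21_1})--(\ref{fig:21_3}), which are indeed deferred. Conversely, the cases you declare to survive are largely the ones that do not: every pairing involving the horizontal weight-two end (\ref{fig:21_4}) is deferred to Section \ref{sec:polytopes} (with (\ref{fig:31_1}) it meets the right string; with (\ref{fig:31_2})--(\ref{fig:31_3}) the two weight-two edges span a common tetrahedron). With your selection the total cannot reach $90$.

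Two further ingredients of the paper's count are absent from your plan. First, when the left string of $C_2$ in (\ref{fig:21_7}) aligns with a horizontal bounded edge of a cubic carrying a diagonal weight-two end, the weight-two end loses one intersection point with $C_2$, so its local multiplicity must be adjusted from $2(3-1)=4$ to $2(3-2)=2$; without this adjustment the subtotal for (\ref{fig:31_2},\ref{fig:21_7}) and (\ref{fig:31_3},\ref{fig:21_7}) is wrong. Second, in (\ref{fig:31_2},\ref{fig:21_7}) the left string may also align with the vertex of $C_3$ not adjacent to a horizontal edge, producing a pentatope (circuit A) with $\mathrm{mult}_{\mathbb C}=1$ and contributing the final $2$ to the complex count and $2$ to the real count. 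You would need to add the vertex-alignment option from the remark following Definition \ref{def:floorplan} to your alignment bookkeeping to capture this.
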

\begin{proof}
A floor plan consists of a cubic $C_3$ with a node germ (Figures \ref{fig:31_1}-\ref{fig:31_3}), a conic $C_2$ with a node germ (Figure \ref{fig:21_triangulations}), and a smooth line $C_1$.  
There are 18 combinations.
\begin{itemize}
    \item[(\ref{fig:31_1}, \ref{fig:21_1}-\ref{fig:21_2})] 
    The cubic contains a right string, which must align with a diagonal bounded edge by Definition \ref{def:floorplan} (4).
    The resulting subdivision contains a triangular prism with two pyramids. This complex may contain two nodes, see Section \ref{sec:polytopes}.
    \item[(\ref{fig:31_1}, \ref{fig:21_3})] 
    The right string in the cubic must align with the vertex of the conic dual to the square in the subdivision, giving rise to the polytope complex shown in Section \ref{sec:polytopes}.
    \item[(\ref{fig:31_1}, \ref{fig:21_4})]
    The right string in the cubic must align with the vertex dual to the left triangle in the conic containing the weight two edge.
   The resulting complex may hold 2 nodes, see Section \ref{sec:polytopes}.
    \item[(\ref{fig:31_1}, \ref{fig:21_5})]
    The resulting subdivision contains a bipyramid and a weight two configuration only overlapping in vertices,
    so the nodes are separated. 
    We have $\text{mult}_{\mathbb{C}}(F) = \text{mult}_{\mathbb{C}}(C_3^*)\cdot \text{mult}_{\mathbb{C}}(C_2^*) = 2 \cdot 2(2-1) = 4$.
    In this case, $\text{mult}_{\mathbb{R},s}(F)$ is undetermined, see Section \ref{subsec:real_mult}.
    \item[(\ref{fig:31_1}, \ref{fig:21_7})] The left string in $C_2$ has to align with a horizontal bounded edge of $C_3$ by Definition \ref{def:floorplan} (4). There are 3 possibilities.
    If it aligns with the bounded edge adjacent to the right string in the cubic, we obtain a prism with two pyramids as in (\ref{fig:31_1}, \ref{fig:21_1}). See Section \ref{sec:polytopes}.
    If it aligns with either of the other two horizontal bounded edges, we obtain two bipyramids in the dual subdivision. Because the diagonal bounded edge of $C_2$ is part of the left sting aligning with a horizontal bounded end not adjacent to the right string of $C_3$, 
    we cannot align the right string 
    with the diagonal edge, such that the end of the right string contains the whole horizontal bounded edge of $C_2$. Instead the end meets the bounded edge somewhere in the middle and passes only through one vertex. Therefore, in the subdivision the second pyramid over the alignment parallelogram  must have its vertex in $C_3$ instead of in the $C_2$, see Figure \ref{fig:fig1a3g}. In total, we get two bipyramids that only share an edge, so the node germs are separated. 
    We have $2 \cdot \text{mult}_{\mathbb{C}}(F) = 2 \cdot \text{mult}_{\mathbb{C}}(C_3^*)\cdot \text{mult}_{\mathbb{C}}(C_2^*) = 2 \cdot (2 \cdot 2) = 8$.
    In these two cases, the edge the string aligns with has $x$-coordinate $k=1$ in the cubic floor and thus by Definition \ref{def:multiplicities_real} they both give $\text{mult}_{\mathbb{R},s}(F) = 0$.
   
    \begin{figure}
        \centering
        \includegraphics[height = 2 in]{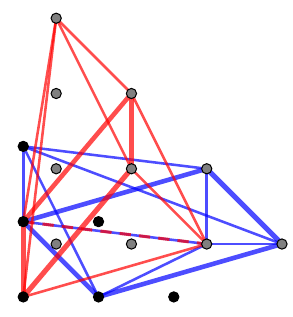}
        \caption{The two bipyramids for one alignment of (\ref{fig:31_1}, \ref{fig:21_7}). 
        The gray (resp. black) dots are the lattice points of the dual polytope to $C_3$ (resp. $C_2$). The shared edge of the bipyramids is marked blue and red.}
        \label{fig:fig1a3g}
    \end{figure}
    \item[(\ref{fig:31_2}, \ref{fig:21_1}-\ref{fig:21_2})]
    We obtain a bipyramid only overlapping with the configuration of the weight two end in vertices or edges. So the nodes are separated and $2\cdot\text{mult}_{\mathbb{C}}(F) = 2\cdot\text{mult}_{\mathbb{C}}(C_3^*)\cdot \text{mult}_{\mathbb{C}}(C_2^*) =2( 2(3-1) \cdot 2 )= 2\cdot 8$.
    The parallelogram has vertices as in the first case of Definition \ref{def:multiplicities_real} (1)  with $k=1,l=1$ and $i_j=2$, so 
    $\text{mult}_{\mathbb{R},s}(F) = 0$.
    \item[(\ref{fig:31_2}, \ref{fig:21_3})]
    As in (\ref{fig:31_2}, \ref{fig:21_1}) we have $\text{mult}_{\mathbb{C}}(F) = 8$.
    For the real multiplicity we need the vertices of the parallelogram. They
    are as in the first case of Definition \ref{def:multiplicities_real} (1)  with $k=1,l=0$ and $i_j=2$, so 
    $\text{mult}_{\mathbb{R},s}(C_2^*) = 2$. The weight $2$ end in $C_3$ has  $\text{mult}_{\mathbb{R},s}(C_3^*) = 4$, so $\text{mult}_{\mathbb{R},s}(F) = 8$.
    \item[(\ref{fig:31_2}-\ref{fig:31_3}, \ref{fig:21_4})] 
    This subdivision contains a tetrahedron which is the convex hull of both weight two ends. We also need a choice of the neighboring points of the two weight two edges. By their special position to each other, it only remains to add the two vertices neighboring the edges in the respective subdivisions dual to their floors. 
    Whether it can contain $2$ nodes is so far undetermined, see Section \ref{sec:polytopes}.
    \item[(\ref{fig:31_2}-\ref{fig:31_3}, \ref{fig:21_5})]
    The nodes are separated, since the weight two ends with any choice of their neighboring points intersect in one vertex. So
    $2\cdot\text{mult}_{\mathbb{C}}(F) = 2\cdot\text{mult}_{\mathbb{C}}(C_3^*)\cdot \text{mult}_{\mathbb{C}}(C_2^*) = 2\cdot(2(3-1) \cdot 2(2-1) )=2\cdot 8$ and $2\cdot\text{mult}_{\mathbb{R},s}(F) = 2\cdot\text{mult}_{\mathbb{R},s}(C_3^*)\cdot \text{mult}_{\mathbb{R},s}(C_2^*) =2\cdot( 2(3-1) \cdot 2(2-1)) = 2\cdot8$.
    \item[(\ref{fig:31_2}, \ref{fig:21_7})]
    There are two possibilities to align the left string in $C_2$ with a horizontal bounded edge in $C_3$.
    If we select the left edge, we have a bipyramid, which does not contain the weight two end. By Lemma \ref{lemma:weight2andbipyramid} the nodes are separate. However, we need to adjust the multiplicity formula from Definition \ref{def:multiplicities} (3) to this case, because due to the alignment of the left string we obtain one intersection point less of the diagonal end of weight two with $C_2$. So instead of $3-1=2$ intersection points to chose from when lifting the node we have $3-2=1$. Thus, we obtain
    $\text{mult}_{\mathbb{C}}(F) = \text{mult}_{\mathbb{C}}(C_3^*)\cdot \text{mult}_{\mathbb{C}}(C_2^*) = 2(3-2) \cdot 2 = 4$. Since the left edge has $x$-coordinate $k=1$, we obtain $\text{mult}_{\mathbb{R},s}(F)=0.$
    If we select the right edge, then the bipyramid contains the weight two end.
    See Section \ref{sec:polytopes}. 
    
    As the cubic floor contains a vertex of $C_3$ not adjacent to a horizontal edge, it is also possible to align the left string with this. In the dual subdivision this gives rise to a pentatope spanned by the triangle dual to the vertex in $C_3$ and the vertical edge in the conic floor dual to the horizontal end of the left string, see Figure \ref{fig:pentatope}. The nodes dual to the length two edge and the pentatope are separated. By \cite{MaMaSh18} we have $\text{mult}_{\mathbb{C}}(C_2^*)=\text{mult}_{\mathbb{R},s}(C_2^*)=1.$ 
    We count:  $\text{mult}_{\mathbb{C}}(F) = \text{mult}_{\mathbb{C}}(C_3^*)\cdot \text{mult}_{\mathbb{C}}(C_2^*) = 2(3-2) \cdot 1 = 2$ and $\text{mult}_{\mathbb{R},s}(F)=\text{mult}_{\mathbb{R},s}(C_3^*)\cdot \text{mult}_{\mathbb{R},s}(C_2^*) =2(3-2)\cdot 1=2.$
    \item[(\ref{fig:31_3}, \ref{fig:21_1}-\ref{fig:21_2})]
        We obtain a bipyramid overlapping with the weight two configuration in one or two vertices, so the nodes are separated and
        $2\cdot\text{mult}_{\mathbb{C}}(F) = 2\cdot\text{mult}_{\mathbb{C}}(C_3^*)\cdot \text{mult}_{\mathbb{C}}(C_2^*) = 2(2(3-1) \cdot 2(2-1)) = 2\cdot8$.
        With the same parallelogram as in $(\ref{fig:31_2},\ref{fig:21_1})$: $\text{mult}_{\mathbb{R},s}(F) = 0$.
    \item[(\ref{fig:31_3}, \ref{fig:21_3})]  This follows (\ref{fig:31_3}, \ref{fig:21_1}), and we have $\text{mult}_{\mathbb{C}}(F)=8$.
    The real multiplicity follows (\ref{fig:31_2}, \ref{fig:21_3}), and we have $\text{mult}_{\mathbb{R},s}(F) = 8$.
    \item[(\ref{fig:31_3}, \ref{fig:21_7})]
    For each of the two choices for the alignment of
    the left string of the conic with a horizontal bounded edge of the cubic, we obtain a bipyramid which may share two vertices with the neighbors of the edge of weight two. As in (\ref{fig:31_2}, \ref{fig:21_7}) we need to adjust the multiplicity formula for the weight two end to 
    $\text{mult}_{\mathbb{C}}(C_3^*)=2(3-2)=2$. We have $2 \cdot \text{mult}_{\mathbb{C}}(F)=2 \cdot 4$.
   For both alignments the dual edges have $x$-coordinate $k=1$ in the cubic floor, giving $\text{mult}_{\mathbb{R},s}(F) = 0$. \\
As $C_3$ also contains a vertex not adjacent to a horizontal edge, this opens a third alignment possibility. However, this vertex is adjacent to the weight two, so the nodes are not separated. The polytope complex can be seen in Figure \ref{fig:complexes}.
\end{itemize}
\end{proof}

\section{Nodes in the same floor}

We now examine cases where both node germs are in the same floor of the floor plan. 
By Lemma \ref{lemma:elimination} we cannot have a right string in the conic part of the floor plan, if the nodes are separated. A few more cases, depicted in Figure \ref{fig:elim_22}, can be eliminated with the following Lemma \ref{lem:elim_22}.

\begin{lemma}\label{lem:elim_22}
 The ways of omitting $2$ points in the floor path in the conic floor shown in Figure \ref{fig:elim_22} do not give separated nodes.
\end{lemma}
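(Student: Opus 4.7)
The plan is to handle each omission pattern displayed in Figure \ref{fig:elim_22} individually, analyzing the dual subdivision produced by the two resulting node germs in the conic floor. Recall that the conic floor has a very small Newton triangle (degree $2$), so the lattice points available for node germs are severely constrained, and separation requires that the supporting polytope complexes (from Figure \ref{fig:circuits}) meet in at most a unimodular face.

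First I would prune the list using the earlier results. Lemma \ref{lemma:elimination} immediately eliminates any pattern forcing a right string in $C_2$, since the linear floor $C_1$ has neither a diagonal bounded edge nor a vertex that is not adjacent to a diagonal edge. I would tabulate, for each of the remaining patterns in Figure \ref{fig:elim_22}, which allowed node germs from Definition \ref{def:nodegerms} can arise: a left string in $C_2$, a horizontal or diagonal end of weight two, or a vertex dual to a parallelogram.

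Next, for each surviving pattern, I would draw the relevant portion of the dual subdivision of the conic's Newton triangle and identify the polytope complexes carried by the two node germs. Because the conic triangle is so small, whenever two such germs coexist their complexes are forced to share more than a vertex. Concretely, two parallelograms coming from string alignments must share a full edge of the conic subdivision; a left string combined with a weight-two end is forced to use a neighbor vertex of the weight-two edge to form its alignment parallelogram, so the resulting bipyramid and weight-two configuration overlap in a $2$-face; and two weight-two ends in the same conic are incompatible with the single diagonal and single horizontal end available on a smooth tropical conic. In each situation the separation condition from the Remark following Definition \ref{def:nodegerms} fails.

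The main obstacle will be the mixed left-string / weight-two cases, where one must check carefully that the particular neighbor chosen to promote circuit E to a bipyramidal weight-two configuration (as in Figure \ref{fig:weight2config}) cannot be disjoint from the bipyramid induced by the aligned string. Here I would argue by exhausting the choices of neighbors allowed by \cite{MaMaSh18}: each choice produces a shared triangular face with the string's bipyramid, so no placement separates the two germs. Once this is settled, the remaining patterns reduce to routine inspection of adjacent parallelograms in the conic subdivision, completing the proof.
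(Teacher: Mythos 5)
Your approach misses the actual content of this lemma and would not go through. The configurations in Figure \ref{fig:elim_22} are not eliminated because their dual polytope complexes overlap too much; they are eliminated because the features produced by these particular omissions of lattice points (vertical ends of weight two, and ``upward'' strings obtained by dropping the top vertex of the conic's Newton triangle) either cannot encode a node at all or fail to fix the curve. The paper's argument is a degrees-of-freedom count: a conic with two node germs passes through only $3$ points, so each omitted point condition must be compensated by an alignment or a circuit. A vertical weight-two end, by the choice of points in Mikhalkin position, has no interaction with the adjacent floors and hence never produces one of the node-encoding circuits of Figure \ref{fig:circuits} (this kills \ref{fig:elim_22_a}--\ref{fig:elim_22_d}); an upward string pulled vertically cannot align with anything in the other floors (\ref{fig:elim_22_c'}, \ref{fig:elim_22_e''}, \ref{fig:elim_22_f''}); a sloped upward or two-dimensional string lies above the line through the points and so still cannot reach any bounded edge of the cubic floor (\ref{fig:elim_22_e}, \ref{fig:elim_22_f}, \ref{fig:elim_22_e'}, \ref{fig:elim_22_f'}); and in \ref{fig:elim_22_a'} one alignment is possible but the string retains a degree of freedom, so the floor plan is not fixed. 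None of this appears in your proposal.

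Moreover, your central claim --- that the conic's Newton triangle is so small that any two node germs are forced to share more than a unimodular face --- is refuted by Proposition \ref{prop:22}, where several pairs of node germs in the conic floor (e.g.\ a left string together with a parallelogram, or a diagonal and a horizontal weight-two end as in (\ref{fig:22_9})) do give separated nodes and contribute $72$ surfaces to the count. Your argument would erroneously eliminate those cases as well. Note also that the features in Figure \ref{fig:elim_22} are not node germs in the sense of Definition \ref{def:nodegerms} (only horizontal or diagonal weight-two ends and left or right strings qualify), so an analysis that starts by classifying them among the allowed germs cannot be the right frame; the lemma is precisely about showing that these non-standard omissions fail, and the reason is kinematic (the curve is not fixed, or the extra end cannot meet the other floors), not combinatorial overlap of dual complexes.
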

\begin{proof}
\begin{figure}[h]
\centering
\begin{subfigure}{.15\textwidth}
  \centering
  \includegraphics[width=.9\linewidth]{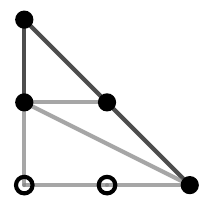}
  \caption{}\label{fig:elim_22_a}
\end{subfigure}
\begin{subfigure}{.15\textwidth}
  \centering
  \includegraphics[width=.9\linewidth]{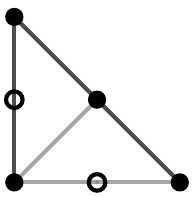}
    \caption{}\label{fig:elim_22_b}
\end{subfigure}
\begin{subfigure}{.15\textwidth}
  \centering
  \includegraphics[width=.9\linewidth]{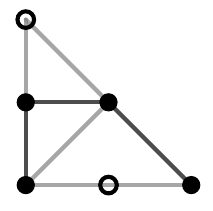}
    \caption{}\label{fig:elim_22_c}
\end{subfigure}
\begin{subfigure}{.15\textwidth}
  \centering
  \includegraphics[width=.9\linewidth]{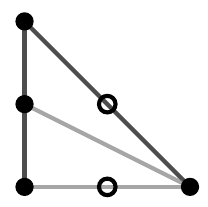}
    \caption{}\label{fig:elim_22_d}
\end{subfigure}
\begin{subfigure}{.15\textwidth}
  \centering
  \includegraphics[width=.9\linewidth]{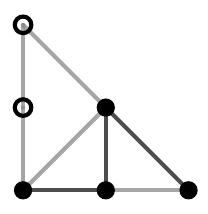}
    \caption{}\label{fig:elim_22_e}
\end{subfigure}
\begin{subfigure}{.15\textwidth}
  \centering
  \includegraphics[width=.9\linewidth]{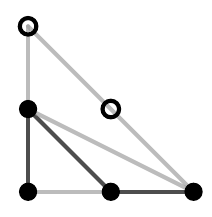}
    \caption{}\label{fig:elim_22_f}
\end{subfigure}
\begin{subfigure}{.15\textwidth}
  \centering
  \includegraphics[width=.9\linewidth]{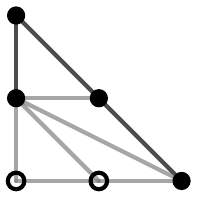}
  \caption{}\label{fig:elim_22_a'}
\end{subfigure}
\begin{subfigure}{.15\textwidth}
  \centering
  \includegraphics[width=.9\linewidth]{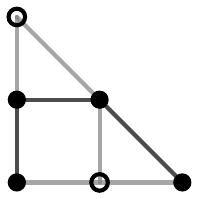}
    \caption{}\label{fig:elim_22_c'}
\end{subfigure}
\begin{subfigure}{.15\textwidth}
  \centering
  \includegraphics[width=.9\linewidth]{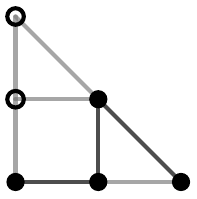}
    \caption{}\label{fig:elim_22_e'}
\end{subfigure}
\begin{subfigure}{.15\textwidth}
  \centering
  \includegraphics[width=.9\linewidth]{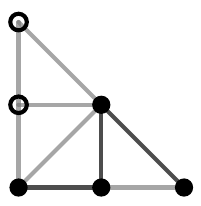}
    \caption{}\label{fig:elim_22_e''}
\end{subfigure}
\begin{subfigure}{.15\textwidth}
  \centering
  \includegraphics[width=.9\linewidth]{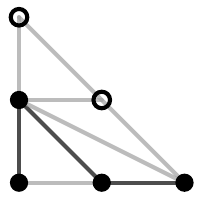}
    \caption{}\label{fig:elim_22_f'}
\end{subfigure}
\begin{subfigure}{.15\textwidth}
  \centering
  \includegraphics[width=.9\linewidth]{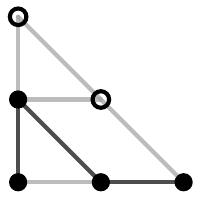}
    \caption{}\label{fig:elim_22_f''}
\end{subfigure}
\caption{Conics through 3 points eliminated by Lemma \ref{lem:elim_22}.}
\label{fig:elim_22}
\end{figure}

If the conic in a floor plan has two node germs, it passes only through $3$ points of the point configuration. In order to fix our cubic surface, every point we omit in the lattice path of the conic floor needs to compensate for the omitted point condition on our cubic surface. 

A vertical weight two end does allow our conic to be fixed by fewer points. But our point configuration ensures the end has no interaction with the other floors and thus cannot give rise to a node-encoding circuit as in Figure \ref{fig:circuits}.
So, 
combined with a classical node germ this does not encode two separated nodes, dealing with \ref{fig:elim_22_a}, \ref{fig:elim_22_b}, \ref{fig:elim_22_c} and \ref{fig:elim_22_d}.

If the top vertex of the Newton polytope of $C_2$ is omitted in the floor path, we always obtain an upwards string. If the upwards string is to be pulled vertically upwards, it can never be aligned with any part of the other floors, thus not fixing the curve, eliminating \ref{fig:elim_22_c'}, \ref{fig:elim_22_e''} and \ref{fig:elim_22_f''}.

If the direction to pull the upwards string has some slope,
as in \ref{fig:elim_22_e} and  \ref{fig:elim_22_f}, or in the 2-dimensional strings in \ref{fig:elim_22_e'} and \ref{fig:elim_22_f'},
we still cannot align with any bounded edges of the other cubic, since we are above the line through the points.
In \ref{fig:elim_22_a'} 
on the other hand we can align the vertical end of the string, but since we have two degrees of freedom this does not fix the curve, as we can still move the first vertical end.\end{proof}

\begin{rem}\label{rem:endalignments}
The last issue in the proof of Lemma \ref{lem:elim_22} can be fixed, if we allow alignments with ends. These however do not give rise to separated nodes \cite{MaMaSh18}. Therefore the cases \ref{fig:elim_22_a}, \ref{fig:elim_22_e}, \ref{fig:elim_22_f}, \ref{fig:elim_22_a'},  \ref{fig:elim_22_e'} and \ref{fig:elim_22_f'} require further investigation, see Section \ref{sec:polytopes}. In this light the non-existence of right strings in the conic floor needs to be investigated.
\end{rem}

\begin{prop}
\label{prop:22}
There are $72$ cubic surfaces containing two nodes, such that the tropical binodal conic has separated nodes and the corresponding node germs are both contained in the conic floor. Of these, at least $4$ are real.
\end{prop}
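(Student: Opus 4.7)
The plan is to parallel the case analyses used in Propositions \ref{prop:31}, \ref{prop:21} and \ref{prop:32}. A floor plan here consists of a smooth cubic $C_3$ (Figure \ref{fig:21_cubic}), a smooth line $C_1$ (Figure \ref{fig:31_line}), and a conic $C_2$ carrying both node germs. Because $C_2$ now passes through only $3$ of the $17$ points, the lattice path in the conic floor omits two lattice points of the Newton polygon of a conic. I will first enumerate the admissible combinations of node germs in $C_2$ and then, for each one that yields separated nodes, compute its complex multiplicity via Definition \ref{def:multiplicities} and its real multiplicity via Definition \ref{def:multiplicities_real}.

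For the enumeration I would proceed as follows. By Lemma \ref{lemma:elimination}, no right string can occur in $C_2$, so each of the two node germs is either a vertex dual to a parallelogram, a horizontal or diagonal end of weight two, or a left string. Lemma \ref{lem:elim_22} together with Remark \ref{rem:endalignments} further eliminates the configurations depicted in Figure \ref{fig:elim_22}, and I would list explicitly the combinations from Figure \ref{fig:21_triangulations} that survive these restrictions. The typical surviving cases pair two parallelograms, a parallelogram with a weight-two end, two left strings (with alignment choices to the horizontal bounded edges or to the three-valent vertex of $C_3$ not adjacent to a horizontal edge), or a left string combined with a weight-two end.

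For each admissible combination I would then check separation of the two node germs inside the dual subdivision, using Lemma \ref{lemma:weight2andbipyramid} and the fact that circuits of types A, D, E separate as soon as the corresponding polytope complexes intersect only in a unimodular face. When separation holds, I would multiply the local complex multiplicities from Definition \ref{def:multiplicities}; where the alignment of a string eliminates one of the intersection points available for lifting a weight-two end (as already seen in case $(\ref{fig:31_2},\ref{fig:21_7})$ of Proposition \ref{prop:32}), I would adjust the factor $2(i_j\pm 1)$ accordingly. For the real count I would read off the relevant vertex coordinates $(k,l)$ of the parallelograms and horizontal edges and apply the parity criteria from Definition \ref{def:multiplicities_real}; entries with $k$ of the wrong parity contribute $0$, so the real total is substantially smaller than the complex one. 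Summing the complex contributions should yield $72$ and summing the real contributions a lower bound of $4$, matching the statement.

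The main obstacle I expect is bookkeeping: keeping track of which alignments of a left string in $C_2$ are geometrically realizable given that another node germ already occupies part of the conic, and distinguishing them from the degenerate alignments in Figure \ref{fig:elim_22} covered by Lemma \ref{lem:elim_22}. A secondary difficulty is the multiplicity correction whenever a string alignment removes an intersection point used to lift a weight-two end, which must be handled consistently on both the complex and the real side. Cases whose polytope complexes do not separate will be deferred, as in the earlier propositions, to Section \ref{sec:polytopes}, and will account for part of the $66$ missing surfaces.
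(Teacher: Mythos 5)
Your overall strategy -- enumerate the conic floors with two node germs, discard the non-separated ones via Lemmas \ref{lemma:elimination} and \ref{lem:elim_22}, then multiply local multiplicities -- is the same as the paper's. But as written the proposal has a genuine gap: it never actually produces the list of admissible conic subdivisions, and the list you guess at is not the right one. The surviving cases are not obtained by pairing two node germs from the single-node-germ catalogue of Figure \ref{fig:21_triangulations}; they are the genuinely new subdivisions of Figure \ref{fig:22_cases}, dual to lattice paths omitting two points. In particular, your ``typical'' list includes a pairing of two parallelograms, which does not occur, and omits the case that contributes a third of the count: a single string with \emph{two} degrees of freedom (case (\ref{fig:22_1b})), whose two horizontal ends can both be aligned with horizontal bounded edges of $C_3$ in three ways. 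That configuration is not a pair of classical node germs at all, so its multiplicity cannot be read off from Definition \ref{def:multiplicities}; the paper has to go back to the underlying circuit and invoke \cite[Lemma 4.8]{MaMaSh18} to get multiplicity $2$ per bipyramid. Without identifying this case (and the weight-two left-string end of case (\ref{fig:22_1}), which is deferred to Section \ref{sec:polytopes}), the sum $12+24+12+12+12=72$ cannot be assembled, and ``summing should yield $72$'' remains an unverified assertion.

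A second, smaller issue: the factor of $3$ from the choice of which horizontal bounded edge of $C_3$ a left string aligns with is essential in almost every contributing case here, and your write-up does not mention counting alignment choices as part of the multiplicity. On the real side, the bound $4$ comes entirely from the single alignment in case (\ref{fig:22_6}) whose dual edge has $x$-coordinate $k=2$; the cases (\ref{fig:22_5}) and (\ref{fig:22_9}) are not ``$0$'' but \emph{undetermined} (Section \ref{subsec:real_mult}), which is why the statement is only a lower bound. Your parity-based dismissal of the other contributions conflates these two situations.
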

\begin{proof} See Figure \ref{fig:22_cases}.
\begin{enumerate}
    \item[(\ref{fig:22_1})] Since the end of the left string, which aligns with a bounded horizontal edge of the conic, is of weight two, we obtain a bipyramid over a trapezoid. 
    We get two different complexes depending upon the alignment, see Section \ref{sec:polytopes}.
\item[(\ref{fig:22_1b})] We have a string with two degrees of freedom, because we can pull on both horizontal ends and vary their distance. Hence, we can align them both with the horizontal bounded edges of the cubic. There are three ways to do this. In the dual subdivisions this gives rise to two bipyramids. In all three cases they intersect maximally in two facets, and thus are separated.
Since the bipyramids arise not from classical node germs, we check their multiplicities via the underlying circuit. By \cite[Lemma 4.8]{MaMaSh18} we obtain multiplicity 2 for each, and thus $3\cdot\text{mult}_{\mathbb{C}}(F)=3\cdot \text{mult}_{\mathbb{C}}(C_2^*)=3\cdot 2\cdot2 =12.$
We get $\text{mult}_{\mathbb{R},s}(F)=0,$ since one end has to align with a bounded edge in $C_3$ with dual edge of $x$-coordinate $k=1$.
    \item[(\ref{fig:22_2}-\ref{fig:22_4})] 
    The conic floor has a left string and a parallelogram.
   This gives two bipyramids in the subdivision which, depending on the choice of alignment for the left string, have a maximal intersection of an edge. We obtain 
   $2\cdot(3 \cdot \text{mult}_{\mathbb{C}}(F)) =2\cdot 12$. 
   The vertex positions of the parallelogram give  $\text{mult}_{\mathbb{R},s}(F)=0$ as in Proposition \ref{prop:21} (\ref{fig:21_1}).
    \item[(\ref{fig:22_5})] As in (\ref{fig:22_2}), we obtain $3 \cdot \text{mult}_{\mathbb{C}}(F) = 12$. The formulas for real multiplicities in Definition \ref{def:multiplicities_real} do not match this case, see Section \ref{subsec:real_mult}.
    \item[(\ref{fig:22_6})]
    The bipyramids arising from the different alignment options only intersect with the neighboring points of the weight two end in one vertex, so $3 \cdot \text{mult}_{\mathbb{C}}(F) = 12$.
  Only the alignment with the horizontal bounded edge of $C_3$ dual to the vertical edge of $x$-coordinate $k=2$ has non-zero real multiplicity, giving $\text{mult}_{\mathbb{R},s}(F)=4$.
    \item[(\ref{fig:22_9})] 
    The two sets of neighboring points to the two weight two ends intersect in one vertex. So the nodes are separated and  $\text{mult}_{\mathbb{C}}(F) =6\cdot 2 = 12,$ while $\text{mult}_{\mathbb{R},s}(F)$ is undetermined, see Section \ref{subsec:real_mult}.
\end{enumerate}
\end{proof}

\begin{figure}[h]
\centering
\begin{subfigure}{.13\textwidth}
  \centering
  \includegraphics[width=.9\linewidth]{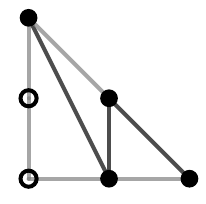}
  \caption{$\ $}
  \label{fig:22_1}
\end{subfigure}
\begin{subfigure}{.13\textwidth}
  \centering
  \includegraphics[width=.9\linewidth]{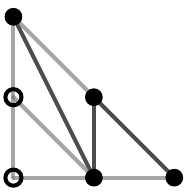}
  \caption{$\ $}
  \label{fig:22_1b}
\end{subfigure}
\begin{subfigure}{.13\textwidth}
  \centering
  \includegraphics[width=.9\linewidth]{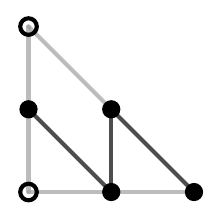}
  \caption{$\ $}
  \label{fig:22_2}
\end{subfigure}
\begin{subfigure}{.13\textwidth}
  \centering
  \includegraphics[width=.9\linewidth]{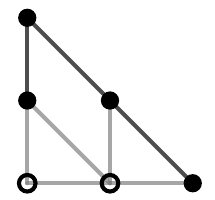}
  \caption{$\ $}
  \label{fig:22_4}
\end{subfigure}
\begin{subfigure}{.13\textwidth}
  \centering
  \includegraphics[width=.9\linewidth]{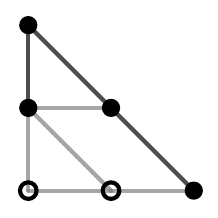}
  \caption{$\ $}
  \label{fig:22_5}
\end{subfigure}
\begin{subfigure}{.13\textwidth}
  \centering
  \includegraphics[width=.9\linewidth]{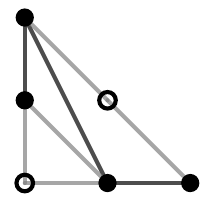}
  \caption{$\ $}
  \label{fig:22_6}
\end{subfigure}
\begin{subfigure}{.13\textwidth}
  \centering
  \includegraphics[width=.9\linewidth]{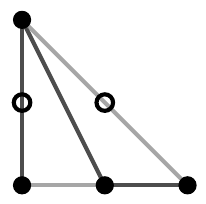}
  \caption{$\ $}
  \label{fig:22_9}
\end{subfigure}
\caption{Dual subdivisions of conics with two node germs.}
\label{fig:22_cases}
\end{figure}

\begin{prop}
\label{prop:33}
There are $8$ cubic surfaces containing two nodes, such that the tropical binodal cubic has separated nodes and the corresponding node germs are both contained in the cubic floor. 
\end{prop}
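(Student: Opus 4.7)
The plan is to follow the same case-analysis strategy used in Propositions~\ref{prop:21}, \ref{prop:32} and \ref{prop:22}, but now with both node germs sitting inside the cubic floor. A floor plan here consists of a tropical cubic $C_3$ carrying both germs, a smooth conic $C_2$ (as in Figure~\ref{fig:31_conic}), and a smooth line $C_1$ (Figure~\ref{fig:31_line}). Since $C_3$ is the top floor, no left string can serve as a germ inside $C_3$ (there is no $C_4$ whose horizontal bounded edge could align with a horizontal end of $C_3$), so by Definition~\ref{def:nodegerms} the germ types to be considered in $C_3$ are: a vertex dual to a parallelogram, a diagonal or horizontal end of weight two, and a right string. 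The possible single-germ subdivisions of $C_3$ already appear in Figures~\ref{fig:31_1}--\ref{fig:31_3}, and every two-germ subdivision arises by further degenerating one of these.

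First I would enumerate all pairs of germ types on $C_3$ compatible with the Mikhalkin point conditions, by tracking lattice paths through the Newton polygon of $C_3$ in the spirit of Lemma~\ref{lem:elim_22}. Many pairs can be discarded immediately: some force a polytope complex that by Lemma~\ref{lem:elim_polytopes} is too small to carry two nodes, and others require a string to align with an unbounded end rather than with a bounded edge or an isolated vertex, which by Remark~\ref{rem:endalignments} does not produce separated nodes and is deferred to Section~\ref{sec:polytopes}.

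Next, for each surviving pair I would draw the dual Newton subdivision and check that the two polytope complexes of type A, D or E (Figure~\ref{fig:circuits}) attached to the two germs meet in at most a unimodular face, which by the remark after Definition~\ref{def:nodegerms} is precisely the separation condition. For each such configuration the complex multiplicity is then read off Definition~\ref{def:multiplicities} as $\text{mult}_\mathbb{C}(F)=\text{mult}_\mathbb{C}(C_3^{*_1})\cdot\text{mult}_\mathbb{C}(C_3^{*_2})$, with factors $2$ for a parallelogram, $2$ or $1$ for a right string (depending on whether its diagonal end aligns with a diagonal bounded edge or with an isolated vertex of $C_2$), $2(3-1)=4$ for a diagonal end of weight two, and $2(3+1)=8$ for a horizontal end of weight two. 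Summing these multiplicities over the surviving configurations should yield the claimed total of~$8$.

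The main obstacle is the enumeration itself: for most pairs of germs in $C_3$ the resulting subdivision either fails to be realizable as the dual of a tropical cubic passing through the required projected points in Mikhalkin position, or forces the two node-encoding polytope complexes to overlap in more than a unimodular face. It is this latter subtlety---distinguishing the genuinely separated configurations from the borderline ones that must be handed off to Section~\ref{sec:polytopes}---that will require the most careful bookkeeping. I expect only a handful of configurations to survive, with multiplicity factors combining to exactly~$8$, for instance a right-string/parallelogram pair of multiplicity $2\cdot 2=4$ occurring in two distinct positions, or a single right-string/diagonal-weight-two pair of multiplicity $2\cdot 4=8$.
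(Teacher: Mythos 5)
Your overall strategy---enumerate the possible pairs of node germs in $C_3$, discard the non-realizable or unseparated ones, and multiply local multiplicities---is exactly the paper's approach. But the proposal stops at the strategy: the entire content of this proposition \emph{is} the case analysis, and you have not carried it out. You write that the surviving configurations ``should yield the claimed total of $8$'' and then guess at what they might be; neither guess matches what actually happens. The paper finds that exactly one floor plan survives, namely a parallelogram node germ together with a diagonal end of weight two in $C_3$ (Figure~\ref{fig:33_1}), contributing $\text{mult}_{\mathbb{C}}(F)=2\cdot 4=8$ via Lemma~\ref{lemma:weight2andbipyramid}. Your proposed ``right-string/parallelogram pair occurring in two distinct positions'' does not occur, and the right-string/diagonal-weight-two pair (Figure~\ref{fig:33_4}) is in fact \emph{eliminated}: the right string's variable edge has too high a slope, so under the Mikhalkin point conditions its diagonal end can never meet the diagonal bounded edge of the conic, and the required alignment of Definition~\ref{def:floorplan}~(4) fails. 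This elimination, together with the analogous failure for the two-dimensional string (Figure~\ref{fig:33_2}) and the deferral of the weight-three edge (Figure~\ref{fig:33_3}) to Section~\ref{sec:polytopes}, is the substance of the proof and is missing from your argument.

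A smaller but real error: you list a horizontal end of weight two in $C_3$ as a candidate germ with multiplicity $2(3+1)=8$. A horizontal weight-two end encodes a node only through its intersection points with the curve of \emph{higher} degree in the floor plan, and $C_3$ is the top floor, so no such germ can occur there (this is the same reason Definition~\ref{def:floorplan}~(5) restricts the top floor to right strings and diagonal weight-two ends, and parallels the elimination of vertical weight-two ends in Lemma~\ref{lem:elim_22}). Including it would not change the final count if you correctly eliminated it later, but as written your list of candidates is wrong and your claimed multiplicity table would mislead the enumeration you have yet to perform.
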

So far the number of real surfaces is undetermined.
\begin{proof}
Only two types of node germs may occur in $C_3$, see Figure \ref{fig:33}.
\begin{enumerate}
\item[(\ref{fig:33_1})] 
Since the weight two end is not contained in the bipyramid the two nodes are separated by Lemma \ref{lemma:weight2andbipyramid}, giving $\text{mult}_{\mathbb{C}}(F) = 2\cdot 4=8$. In this case, $\text{mult}_{\mathbb{R},s}(F)$ is undetermined, see Section \ref{subsec:real_mult}.
\item[(\ref{fig:33_4})] 
The classical alignment condition of the right string with diagonal end of weight two can not be satisfied, since the direction vector of the variable edge has a too high slope. Due to the point conditions the diagonal end of weight two and the diagonal bounded edge of the conic curve never meet. 
\item[(\ref{fig:33_2})] Here we have a two-dimensional string. By the same argument as in (\ref{fig:33_4}) we cannot align the middle diagonal end with the diagonal bounded edge of the conic. Aligning the right string with the diagonal bounded edge of the conic 
does not fixate our floor plan, since we can still move the middle diagonal end of the cubic. 
\item[(\ref{fig:33_3})]
We have three tetrahedra in the subdivision containing the weight three edge. This could contain two nodes, see Section \ref{sec:polytopes}.

\end{enumerate}
In (\ref{fig:33_4}), (\ref{fig:33_2}) alignments with ends are an option, see Section \ref{sec:polytopes}.
\end{proof}

\begin{figure}[h]
\centering
\begin{subfigure}{.24\textwidth}
  \centering
  \includegraphics[width=.9\linewidth]{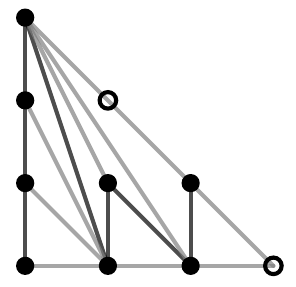}
  \caption{$\ $}
  \label{fig:33_1}
\end{subfigure}
\begin{subfigure}{.24\textwidth}
  \centering
  \includegraphics[width=.9\linewidth]{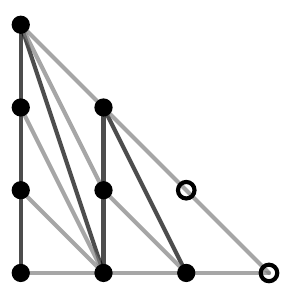}
  \caption{$\ $}
  \label{fig:33_4}
\end{subfigure}
\begin{subfigure}{.24\textwidth}
  \centering
  \includegraphics[width=.9\linewidth]{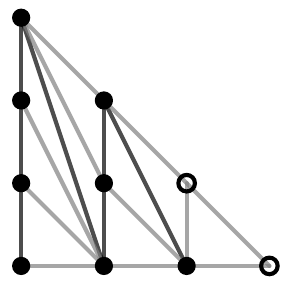}
  \caption{$\ $}
  \label{fig:33_2}
\end{subfigure}
\begin{subfigure}{.24\textwidth}
  \centering
  \includegraphics[width=.9\linewidth]{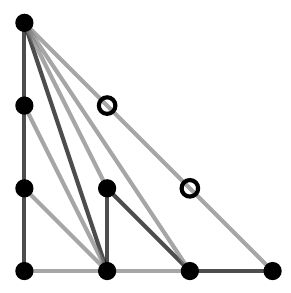}
  \caption{$\ $}
  \label{fig:33_3}
\end{subfigure}
\caption{Cubics with two node germs.}
\label{fig:33}
\end{figure}

%
\section{Next steps}

\subsection{Dual complexes of unseparated nodes}
\label{sec:polytopes}
In previous sections, we encountered cases where two distinct node germs did not give rise to separated nodes. 
The dual complexes
arising from 
these cases are shown in Figure \ref{fig:complexes}.

We also encountered the floors which do not give separated nodes in Figure \ref{fig:elim_22} and the proof of Proposition \ref{prop:33}. By new alignment conditions, they might encode unseparated nodes, see Remark \ref{rem:endalignments}. Alignment with ends is not allowed for separated nodes because  circuit D in Figure \ref{fig:circuitD} is then contained in the  boundary of the Newton polytope and cannot encode a single node \cite{MaMaSh18}. However, our cases have one degree of freedom more and thus might allow not only the alignment of two ends, but also the alignment of the vertices the ends are adjacent to. This leads to a triangular prism shape in the subdivision, which has at least one parallelogram shaped facet in the interior of the Newton polytope. 
At this time, we do not yet know whether any of these cases can contain two nodes or with what multiplicity they should be counted with, but in total they ought to give the 66 missing surfaces from our count.

\begin{figure}[h]
\centering
\begin{subfigure}{.42\textwidth}
  \centering
  \includegraphics[height = 1.2 in]{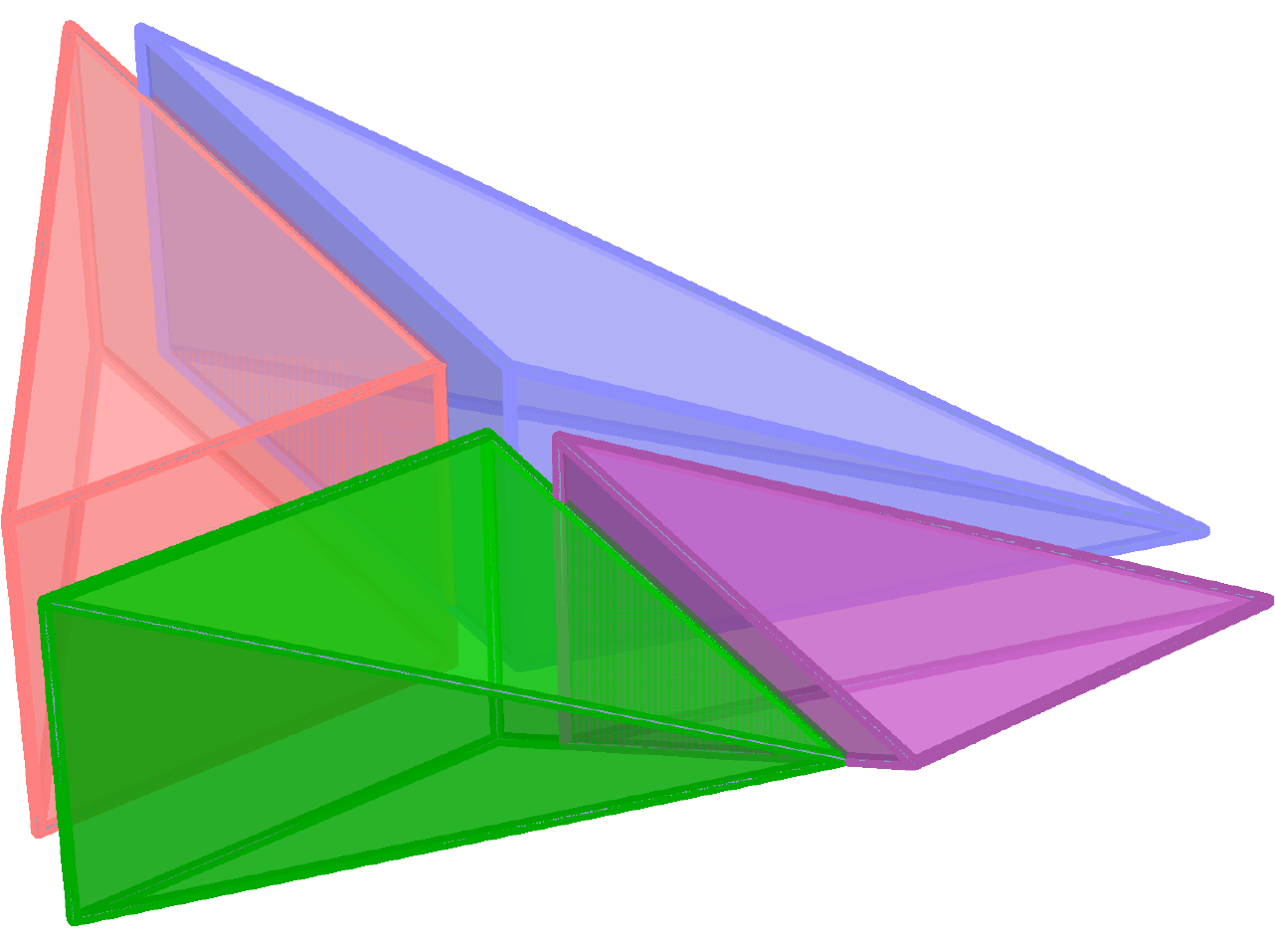}
  \caption*{(\ref{fig:21_1}-\ref{fig:21_3}),  (\ref{fig:31_1}, \ref{fig:21_1}), (\ref{fig:31_1}, \ref{fig:21_2}), (\ref{fig:31_1}, \ref{fig:21_7})}
\end{subfigure}
\begin{subfigure}{.26\textwidth}
  \centering
  \includegraphics[height = 1.2 in]{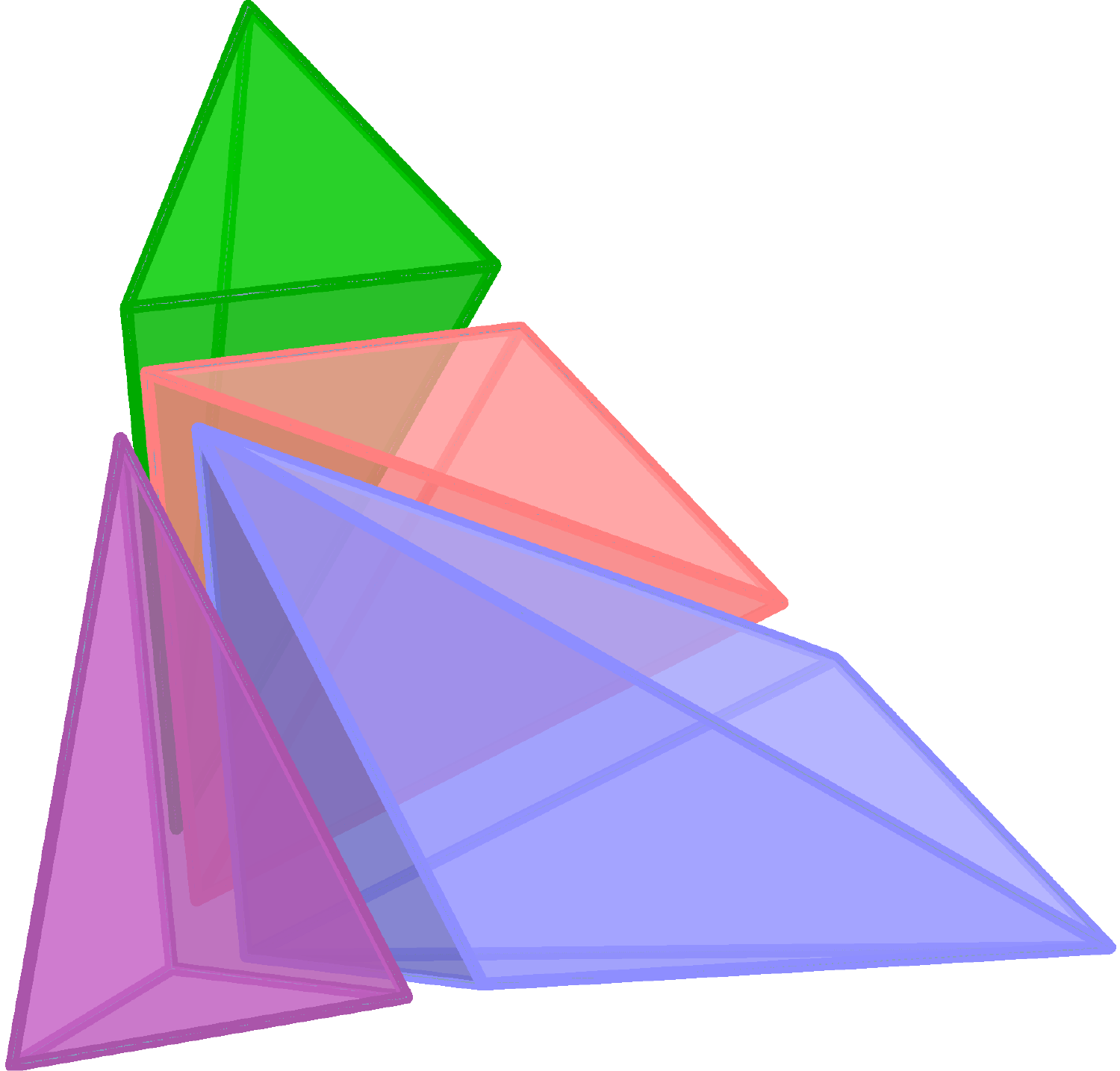}
   \caption*{(\ref{fig:31_1}, \ref{fig:21_4})}
\end{subfigure}
\begin{subfigure}{.22\textwidth}
  \centering
  \includegraphics[height = 1.2 in]{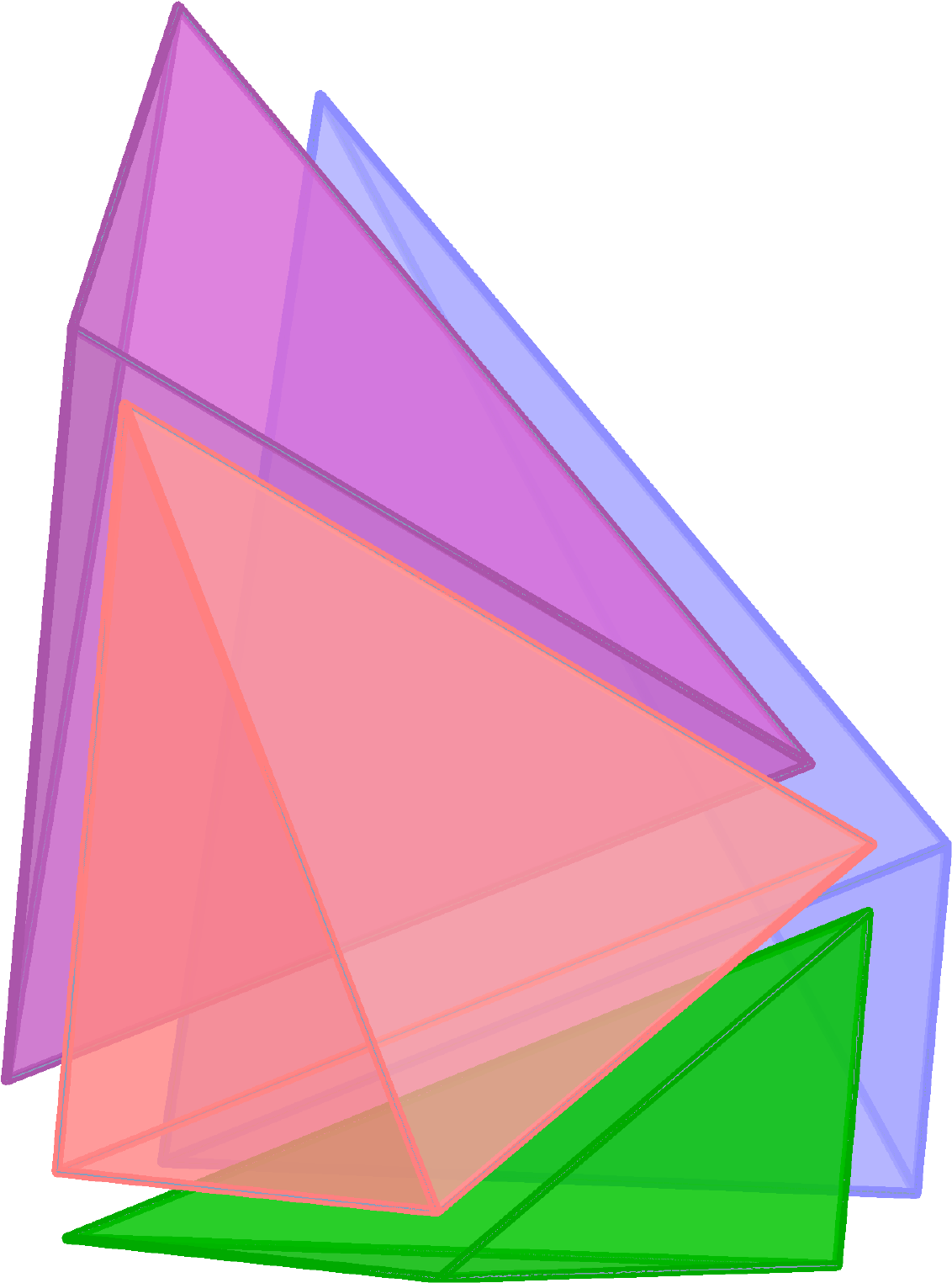}
   \caption*{(\ref{fig:31_2}, \ref{fig:21_4}), (\ref{fig:31_3}, \ref{fig:21_4})}
\end{subfigure}
\begin{subfigure}{.35\textwidth}
  \centering
  \includegraphics[height = 1.2 in]{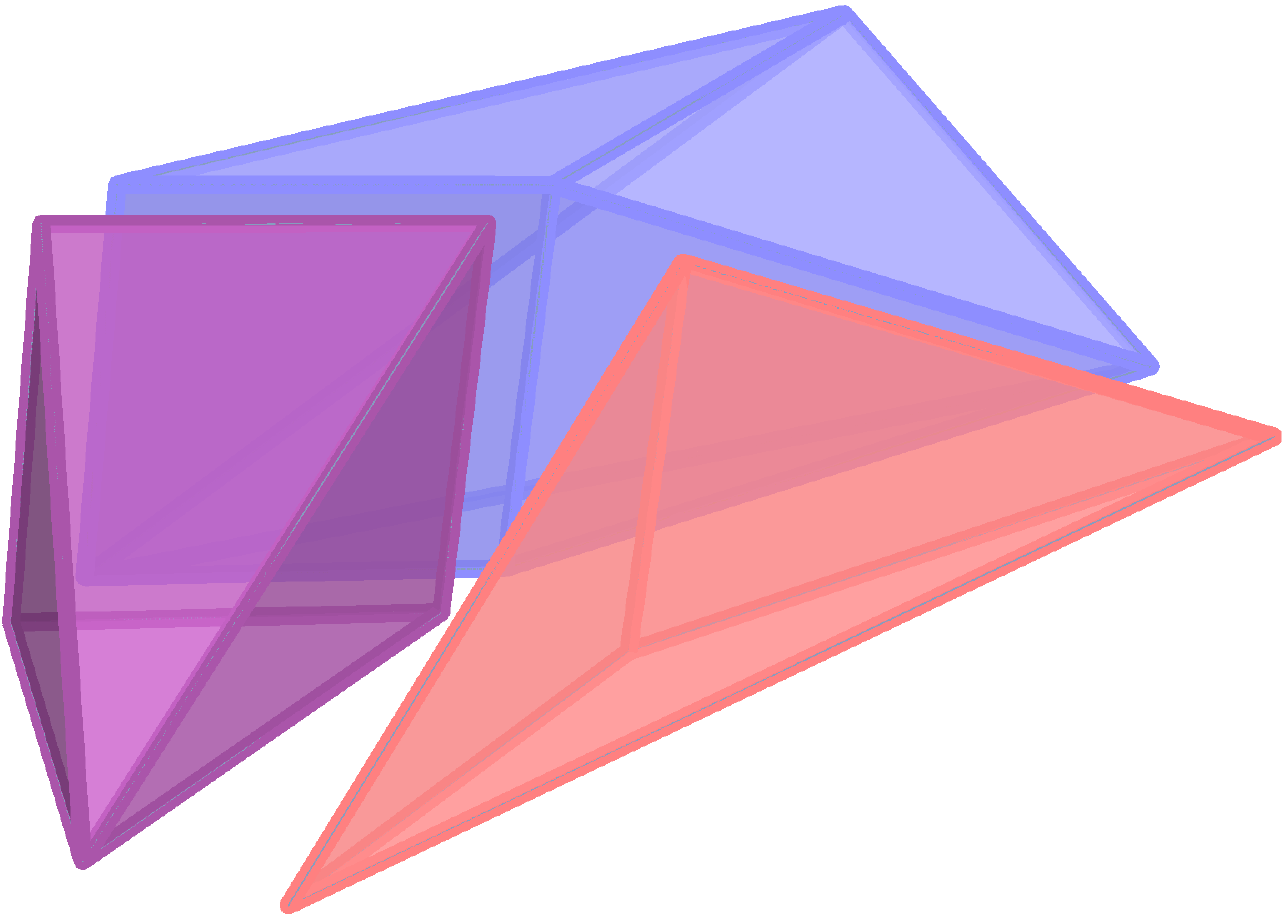}
 \caption*{(\ref{fig:31_1}, \ref{fig:21_3})}
\end{subfigure}
\begin{subfigure}{.3\textwidth}
  \centering
  \includegraphics[height = 1.2 in]{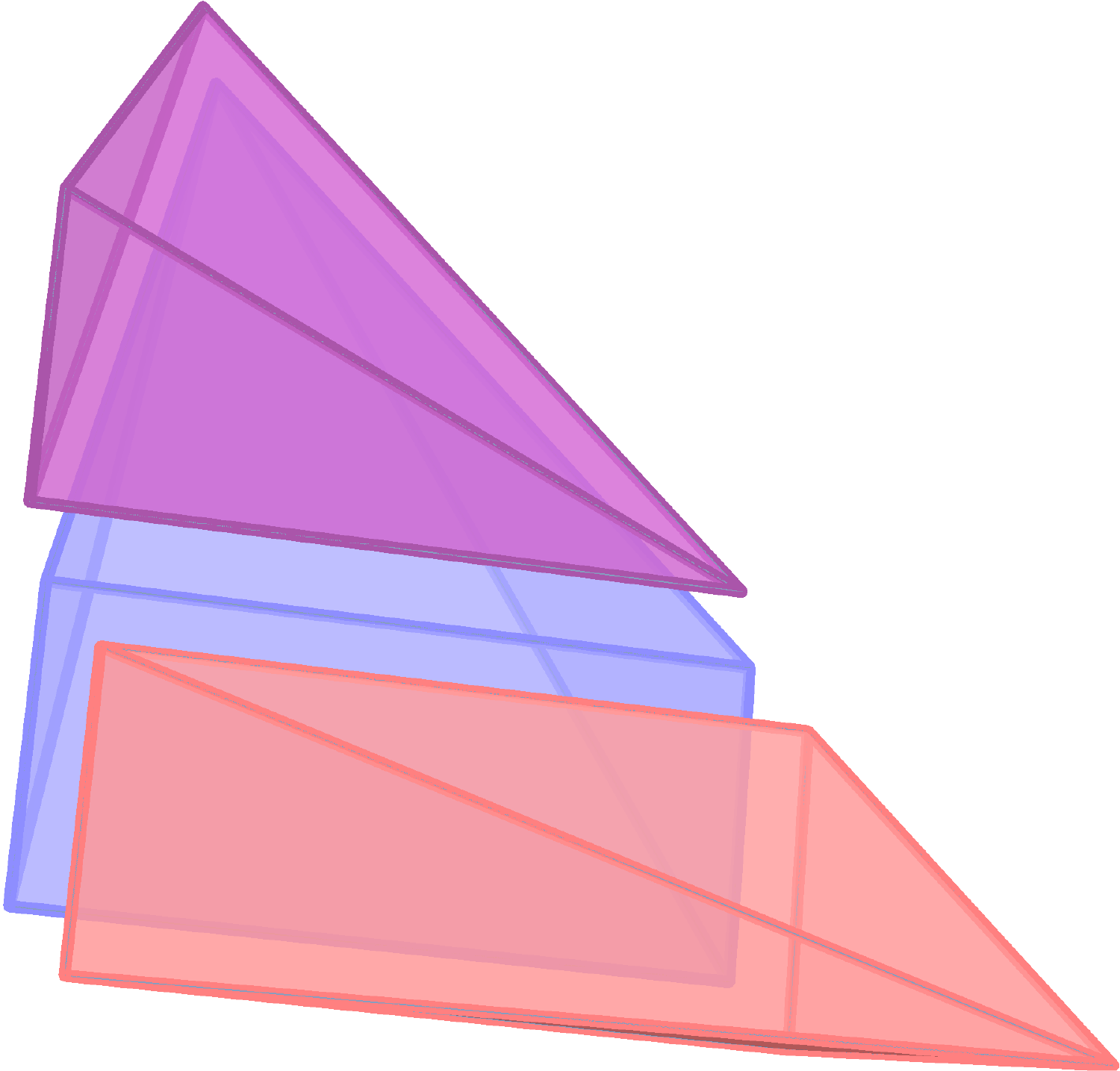}
   \caption*{(\ref{fig:31_2}, \ref{fig:21_7})}
\end{subfigure}
\begin{subfigure}{.26\textwidth}
  \centering
  \includegraphics[height = 1.2 in]{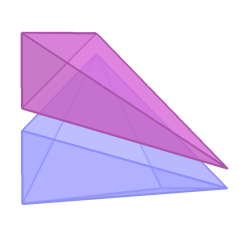}
   \caption*{(\ref{fig:31_3},\ref{fig:21_7})}
\end{subfigure}
\begin{subfigure}{.26\textwidth}
  \centering
  \includegraphics[height = 1.2 in]{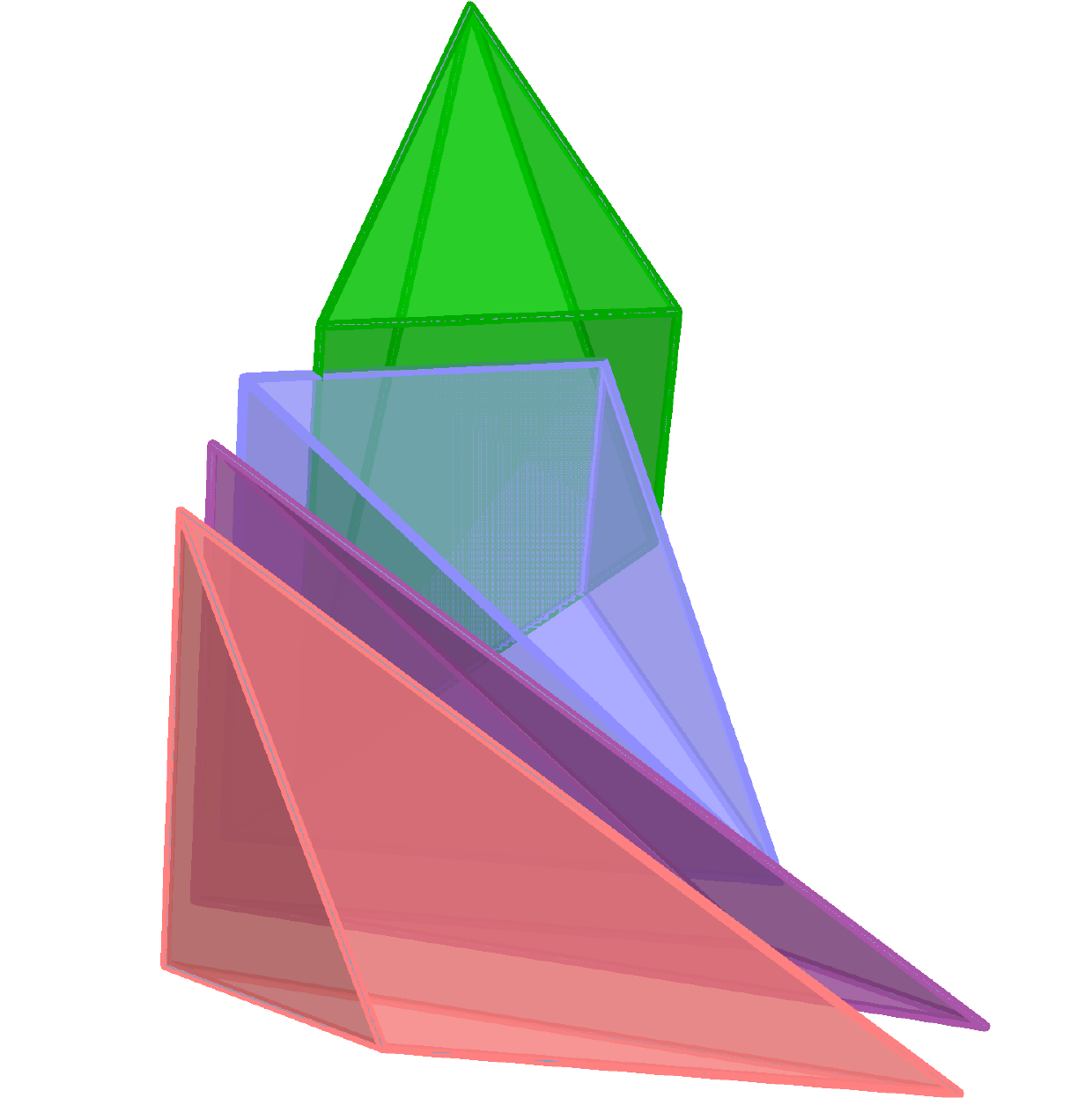}
   \caption*{(\ref{fig:22_1})}
\end{subfigure}
\begin{subfigure}{.24\textwidth}
  \centering
  \includegraphics[height = 1.2 in]{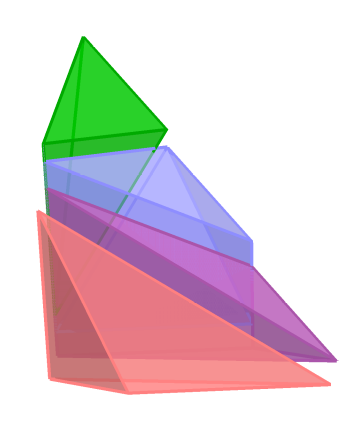}
   \caption*{(\ref{fig:22_1})}
\end{subfigure}
\begin{subfigure}{.42\textwidth}
  \centering
  \includegraphics[height = 1.2 in]{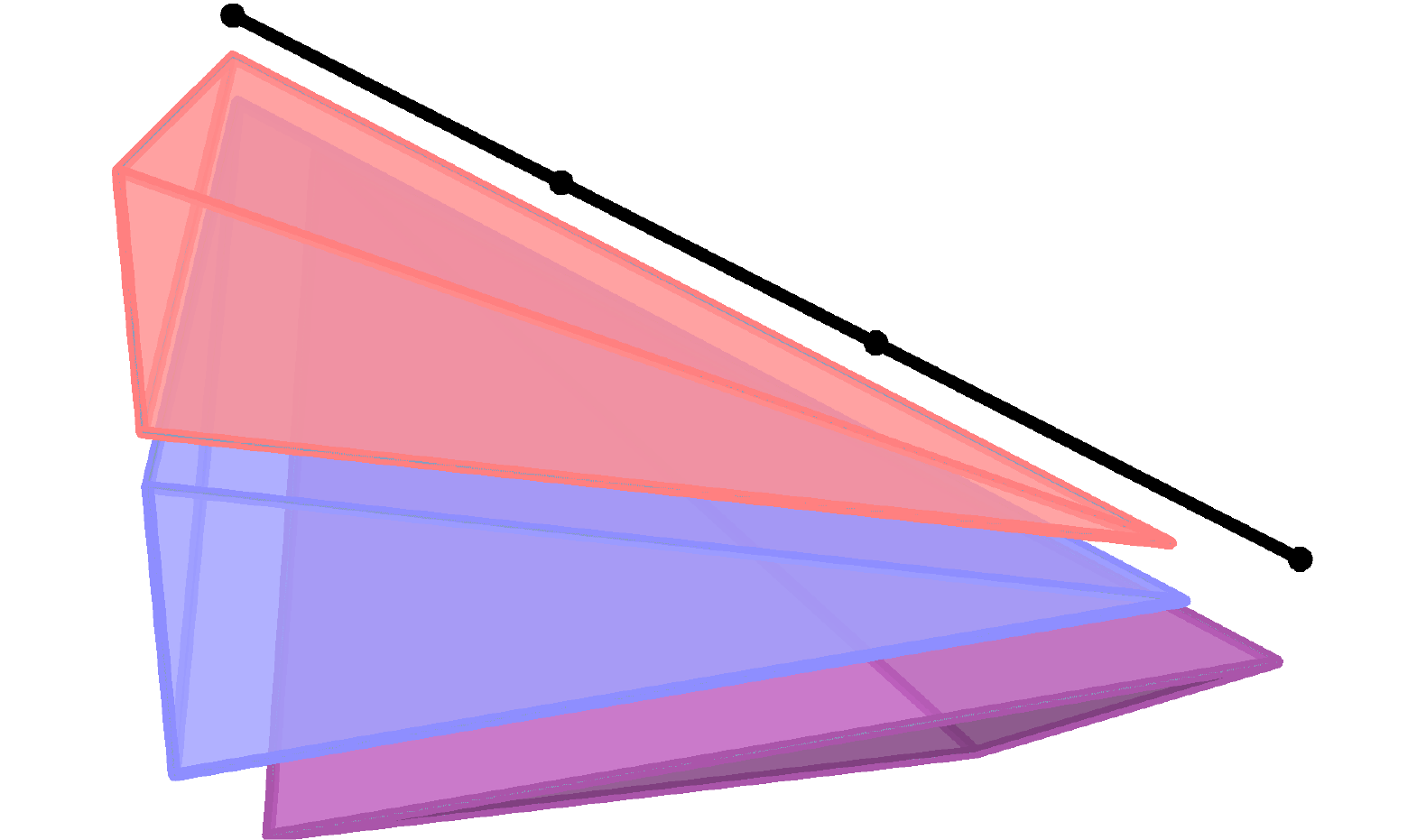}
   \caption*{(\ref{fig:33_3})}
\end{subfigure}
\caption{Complexes whose duals could have to two nodes.}
\label{fig:complexes}
\end{figure}

\subsection{Undetermined real multiplicities}
\label{subsec:real_mult}

In the previous sections, we encountered cases in which the real multiplicity was undefined. This happens when 
$C^{*}_{i_j}$ is the midpoint of an edge of weight two, 
$C^{*}_{i_j}$ is a horizontal edge of weight two ((\ref{fig:21_4}) and (\ref{fig:22_9})),
and $C^{*}_{i_j}$ is a right string whose diagonal end aligns with a diagonal bounded edge ((\ref{fig:31_1}), (\ref{fig:31_1}, \ref{fig:21_5}), (\ref{fig:22_5}), and (\ref{fig:33_1})).
There might be real lifts satisfying the point conditions coming from floor plans containing these node germs, but the number of real solutions is hard to control. An investigation of these cases is beyond the scope of this paper, so we leave Theorem \ref{thm:56} as a lower bound under these assumptions.

We may compute the real multiplicity of (\ref{fig:22_5}), as well as of right strings aligning with diagonal bounded edges as follows. Shift the parallelogram to a special position used to prove \cite[Lemma 4.8]{MaMaSh18}. The equations of the proof of \cite[Lemma 4.8]{MaMaSh18} applied to our exact example then need to be checked for the existence of real solutions.

\bibliography{main}

\end{document}